\newtheorem{thm}{Theorem}
\newtheorem{lemma}[thm]{Lemma}
\newtheorem{cor}[thm]{Corollary}
\newcommand{\reals}{\mathbb{R}}
\newcommand{\naturals}{\mathbb{N}}
\newcommand{\integers}{\mathbb{Z}}
\newcommand{\eps}{\varepsilon}
\newcommand{\supp}{\text{supp}}
\newcommand{\Ffamily}{\mathcal{F}}
\newcommand{\algebra}{\mathcal{A}}
\newcommand{\Schwartz}{\mathcal{S}(\mathbb{R}_+)}
\newcommand{\Schwartzc}{\mathcal{S}_c(\mathbb{R}_+)}
\newcommand{\SchwartzR}{\mathcal{S}(\mathbb{R})}
\newcommand{\MM}{\mathfrak{M}}
\begin{document}
\title{From Schwartz space to Mellin transform}
\author{Mateusz Krukowski}
\affil{Institute of Mathematics, \L\'od\'z University of Technology, \\ W\'ol\-cza\'n\-ska 215, \
90-924 \ \L\'od\'z, \ Poland \\ \vspace{0.3cm} e-mail: mateusz.krukowski@p.lodz.pl}
\maketitle

\begin{abstract}
The primary motivation behind this paper is an attempt to provide a thorough explanation of how the Mellin transform arises naturally in a process akin to the construction of the celebrated Gelfand transform. We commence with a study of a class of Schwartz functions $\mathcal{S}(\mathbb{R}_+),$ where $\reals_+$ is the set of all positive real numbers. Various properties of this Fr\'echet space are established and what follows is an introduction of the Mellin convolution operator, which turns $\Schwartz$ into a commutative Fr\'echet algebra. We provide a simple proof of Mellin-Young convolution inequality and go on to prove that the structure space $\Delta(\mathcal{S}(\mathbb{R}_+),\star)$ (the space of nonzero, linear, continuous and multiplicative functionals $m:\mathcal{S}(\mathbb{R}_+)\longrightarrow \reals$) is homeomorphic to $\reals.$ Finally, we show that the Mellin transform arises in a process which bears a striking resemblance to the construction of the Gelfand transform. 
\end{abstract}

\smallskip
\noindent 
\textbf{Keywords : } Schwartz space, structure space, Mellin transform
\vspace{0.2cm}
\\
\textbf{AMS Mathematics Subject Classification (2020): } 44A15, 42A38

\section{Introduction}
\label{Chapter:Introduction}

Mellin transform is a less popular, yet still very well-known twin brother of the celebrated Fourier transform. It is named after the Finnish mathematician Hjalmar Mellin, who introduced it in a paper published in 1897. Although well over a century has passed, the Mellin transform still enjoys an undiminishing attention from the mathematical community. In order to support that claim, let us mention just a couple of examples from the past two decades, which by no means exhaust the vast literature on the subject:
\begin{itemize}
	\item In 2001, Matti Jutila reserched the Mellin transform of the square of Riemann's zeta function (see \cite{Jutila}).
	
	\item In 2002, Paul L. Butzer, Anatoly A. Kilbas and Juan J. Trujillo applied the Mellin transform to Hadamard-type fractional integrals (see \cite{ButzerKilbasTrujillo} and \cite{ButzerKilbasTrujillo2}).
	
	\item In 2006, Jason Twamley and G. J. Milburn analyzed the quantum Mellin transform (see \cite{TwamleyMilburn}).
	
	\item In 2008, Victor Guillemin and Zuoqin Wang applied the twisted Mellin transform to problems in toric geometry (see \cite{GuilleminWang}).

	\item In 2014, Salvatore Butera and Mario Di Paola used the Mellin transform to solve fractional differential equations (see \cite{ButeraDiPaola}).

	\item In 2015, Carlo Bardaro, Paul L. Butzer and Ilaria Mantellini studied the foundations of fractional calculus in the Mellin transform setting (see \cite{BardaroButzerMantellini}).

	\item In 2016, Carlo Bardaro, Paul L. Butzer, Ilaria Mantellini and Gerhard Schmeisser established a version of the Paley-Wiener theorem of Fourier analysis in the frame of Mellin transforms (see \cite{BardaroButzerMantelliniSchmeisser}).

	\item In 2016, Hiroaki Matsueda examined holographic renormalization by singular value decomposition and found a mathematical form of each SVD component by the inverse Mellin transform (see \cite{Matsueda}).

	\item In 2019, Alexander E. Patkowski offered two new Mellin transform evaluations for the Riemann zeta function in the critical strip (see \cite{Patkowski}).
\end{itemize}

Seeing that the topic of Mellin transform is still an active field of mathematical research, we have decided to contribute by investigating the parallels between the constructions of Gelfand and Mellin transform. Let us summarize the structure of the paper in order to facilitate the comprehension of the ``big picture'' before delving into technical subtleties. 

In Section \ref{section:Schwartzfunctions} we define the class of Schwartz functions on positive real numbers. This is the base setting in which the Mellin transform will ``be born''. We explain why the Schwartz class is a Fr\'echet space but not a Banach space (since it is not normable). The section concludes with the introduction of a dense subset of the Schwartz class, which is easier to work with than the whole class itself.

Section \ref{section:Mellinconvolution} focuses on the study of the Mellin transform. The climactic point of this section is a proof that the Schwartz class together with the Mellin transform as multiplication constitutes a Fr\'echet algebra. What follows is a brief digression on Mellin-Young convolution inequality. 

Section \ref{section:structurespaceandMellintransfrom} is devoted to the structure space of the Schwartz class. This is a space of all nonzero, linear and multiplicative functionals on the Schwartz class, which we equip with the weak* topology. We prove that the structure space is bijective, and later on in fact homeomorphic to $\reals$ with the standard topology -- this is reminiscent of the classical results in standard Gelfand theory. 

The final Section \ref{section:finalremarks} serves as an epilogue and capitalizes on previously proven results. We explain how the Mellin transform arises in a similar vein to the construction of the Gelfand transform. At the same time, we stay aware and keep track of the differences between the two transforms.  Lastly, the paper concludes with a bibliography.

\section{Schwartz functions on positive reals}
\label{section:Schwartzfunctions}

We commence our study with a definition of a vector space of Schwartz functions on positive real numbers $\reals_+$:
\begin{gather}
\Schwartz := \bigg\{f\in C^{\infty}(\reals_+)\ :\ \forall_{\substack{\alpha\in\integers\\ \beta \in \naturals_0}}\ \exists_{M_{\alpha,\beta} > 0}\ \sup_{x\in\reals_+}\ |x^{\alpha} f^{(\beta)}(x)| < M_{\alpha,\beta} \bigg\},
\label{definitionofSchwartz}
\end{gather}

\noindent
where $\integers$ and $\naturals_0$ stand for integer and natural numbers with $0$, respectively. Our definition is reminiscent of the classical Schwartz class (see Chapter 8.1 in \cite{Follandrealanalysis}):
\begin{gather}
\SchwartzR := \bigg\{f\in C^{\infty}(\reals)\ :\ \forall_{\alpha,\beta\in\naturals_0}\ \exists_{M_{\alpha,\beta} > 0}\ \sup_{x\in\reals}\ (1+|x|)^{\alpha} |f^{(\beta)}(x)| < M_{\alpha,\beta} \bigg\}.
\label{definitionofSchwartzR}
\end{gather}

\noindent
We usually say that the classical Schwartz functions decrease rapidly in $\pm$infinity. Since the functions in $\Schwartz$ are defined on $\reals_+$, which is ``less symmetric'' than $\reals$, we may say that they are rapidly decreasing both at $0$ and $+$infinity. Hence, we use $x^{\alpha},\ \alpha\in\integers$ in definition \eqref{definitionofSchwartz} rather than $(1+|x|)^{\alpha},\ \alpha\in\naturals_0$ as in definition \eqref{definitionofSchwartzR}. 

$\Schwartz$ can be turned into a locally convex vector space (see Theorem 1.37 in \cite{RudinFunctionalAnalysis}, p. 27) by a family of separating seminorms $p_{\alpha,\beta}:\Schwartz\longrightarrow [0,\infty)$ given by
$$p_{\alpha,\beta}(f) := \sup_{x\in\reals_+}\ \left|x^{\alpha} f^{(\beta)}(x)\right|.$$

\noindent
A family of sets of the form 
$$V_{p_{\alpha_1,\beta_1}}(\eps)\cap \ldots\cap V_{p_{\alpha_n,\beta_n}}(\eps)$$ 

\noindent
for some $(\alpha_k)_{k=1}^n\subset \integers,\ (\beta_k)_{k=1}^n\subset \naturals_0,\ \eps>0,$ where 
$$V_{p_{\alpha_k,\beta_k}}(\eps) := \bigg\{f\in\Schwartz\ :\ p_{\alpha_k,\beta_k}(f) < \eps\bigg\}$$

\noindent
forms a neighbourhood base at zero for the locally convex topology of $\Schwartz.$ Furthermore, since $(p_{\alpha,\beta})_{\alpha\in\integers,\beta\in\naturals_0}$ is a countable family, then the topology of $\Schwartz$ is metrizable (see Lemma 5.75 in \cite{AliprantisBorder}, p. 206) by a translation-invariant metric $d_{\Schwartz}:\Schwartz \times \Schwartz \longrightarrow [0,\infty)$ given by the formula
$$d_{\Schwartz}(f,g) := \sum_{\substack{\alpha\in\integers,\\ \beta\in\naturals_0}}\ \frac{2^{-(|\alpha|+|\beta|)}p_{\alpha,\beta}(f-g)}{1+p_{\alpha,\beta}(f-g)}.$$

\noindent
In particular this means that a sequence $(f_m)$ converges to $f$ in $\Schwartz$ if and only if 
$$\forall_{\substack{\alpha\in\integers,\\ \beta \in\naturals_0}}\ \lim_{m\rightarrow \infty}\ p_{\alpha,\beta}(f_m-f) = 0.$$ 

\noindent
This also means that a convergent sequence $(f_m)$ is topologically bounded (see Lemma 5.76 in \cite{AliprantisBorder}, p. 206), because $d_{\Schwartz}-$boundedness is obvious (recall that in general topological boundedness and $d_{\Schwartz}-$boundedness need not coincide -- see Chapter 1.29 in \cite{RudinFunctionalAnalysis}, p. 23).

The following result will be very useful in the sequel:

\begin{lemma}
Let $B \subset \Schwartz$ be a topologically bounded set. For every $\alpha\in\integers,\ \beta\in\naturals_0$ and $p\geqslant 1$ there exists a constant $M>0$ such that 
$$\forall_{f\in B}\ \int_{\reals_+}\ \left|x^{\alpha}f^{(\beta)}(x)\right|^p\ dx < 2M^p.$$
\label{Schwartzfunctionintegrable}
\end{lemma}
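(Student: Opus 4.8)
The plan is to exploit the fact that topological boundedness of $B$ in the Fr\'echet space $\Schwartz$ is equivalent to the uniform boundedness of each seminorm $p_{\alpha,\beta}$ over $B$. Indeed, every set $V_{p_{\alpha,\beta}}(1)$ is a neighbourhood of zero, so it must absorb the topologically bounded set $B$; this forces
$$M_{\alpha,\beta} := \sup_{f\in B}\ p_{\alpha,\beta}(f) < \infty \qquad \text{for all } \alpha\in\integers,\ \beta\in\naturals_0.$$
Once this is in hand, the estimate reduces to splitting the integral at $x=1$ and playing the positive and negative powers of $x$ in definition \eqref{definitionofSchwartz} against one another, exactly as one does for the classical Schwartz class $\SchwartzR$.

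First I would write $\int_{\reals_+}\ |x^{\alpha}f^{(\beta)}(x)|^p\ dx = \int_{(0,1]}\ (\cdot)\ dx + \int_{[1,\infty)}\ (\cdot)\ dx$. On $(0,1]$ I simply bound $|x^{\alpha}f^{(\beta)}(x)| \leqslant p_{\alpha,\beta}(f)\leqslant M_{\alpha,\beta}$ and integrate over a set of finite measure, which gives $\int_{(0,1]}\ |x^{\alpha}f^{(\beta)}(x)|^p\ dx \leqslant M_{\alpha,\beta}^p$. On $[1,\infty)$, since $x\geqslant 1$ I can afford to borrow two extra powers of $x$: $|x^{\alpha}f^{(\beta)}(x)| = x^{-2}\,|x^{\alpha+2}f^{(\beta)}(x)| \leqslant x^{-2}M_{\alpha+2,\beta}$, hence
$$\int_{[1,\infty)}\ |x^{\alpha}f^{(\beta)}(x)|^p\ dx \leqslant M_{\alpha+2,\beta}^p\int_1^{\infty} x^{-2p}\ dx = \frac{M_{\alpha+2,\beta}^p}{2p-1} \leqslant M_{\alpha+2,\beta}^p,$$
where the final inequality uses $p\geqslant 1$. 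The shift by $2$ rather than $1$ is precisely what keeps the tail integral convergent even in the borderline case $p=1$.

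Adding the two pieces and putting $M := 1 + \max\{M_{\alpha,\beta},\, M_{\alpha+2,\beta}\}$ yields
$$\int_{\reals_+}\ |x^{\alpha}f^{(\beta)}(x)|^p\ dx \leqslant M_{\alpha,\beta}^p + M_{\alpha+2,\beta}^p < 2M^p$$
uniformly over $f\in B$, which is the claim (the strict inequality comes from $M$ being strictly larger than both $M_{\alpha,\beta}$ and $M_{\alpha+2,\beta}$). I do not anticipate a genuine obstacle: the only step deserving real care is the passage from ``topologically bounded'' to ``every $p_{\alpha,\beta}$ bounded on $B$'', and, to a lesser extent, remembering to borrow enough powers of $x$ on the unbounded part so that the argument survives $p=1$. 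Measurability is a non-issue, since each $f^{(\beta)}$ is continuous on $\reals_+$.
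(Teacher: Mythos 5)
Your proof is correct and follows essentially the same route as the paper: both arguments bound the seminorms $p_{\alpha,\beta}$ and $p_{\alpha+2,\beta}$ uniformly over the topologically bounded set $B$, split the integral at $x=1$, and use the two borrowed powers of $x$ to make the tail integral $\int_1^\infty x^{-2p}\,dx$ converge for all $p\geqslant 1$. The only cosmetic difference is that the paper packages the two seminorm bounds into a single constant $M$ with $p_{\alpha,\beta}(f)+p_{\alpha+2,\beta}(f)<M$, whereas you keep them separate and take a maximum at the end.
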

\begin{proof}
Since $B$ is topologically bounded set then (see Lemma 5.76 in \cite{AliprantisBorder}, p. 206 or Theorem 1.37(b) in \cite{RudinFunctionalAnalysis}, p. 28) there exists a constant $M>0$ such that
$$\forall_{f \in B}\ p_{\alpha,\beta}(f) + p_{\alpha+2,\beta}(f) < M.$$

\noindent
In particular, for every $f \in B$ we have
\begin{gather*}
\forall_{x\in (0,1]}\ x^{\alpha}\left|f^{(\beta)}(x)\right| < M \hspace{0.4cm}\text{and}\hspace{0.4cm} \forall_{x\in [1,\infty)}\ x^{\alpha+2}\left|f^{(\beta)}(x)\right| < M.
\end{gather*}

\noindent
In order to see that $M$ is the desired constant, we compute
$$\forall_{f\in B}\ \int_0^{\infty}\ \left|x^{\alpha}f^{(\beta)}(x)\right|^p\ dx < M^p + \int_1^{\infty}\ \frac{M^p}{x^{2p}}\ dx \leqslant 2M^p,$$

\noindent
which concludes the proof.
\end{proof}

Our next goal is to prove that $\Schwartz$ is a Fr\'echet space. Given what we said above, the only piece of information we are missing is whether $d_{\Schwartz}$ is \textit{complete} or not. In order to solve this puzzle we will need the following lemma:

\begin{lemma}
If the sequence $(f_n)\subset C^1(\reals_+)$ converges uniformly to $f$ and $(f_n')$ converges uniformly to $g$ then $f\in C^1(\reals_+)$ and $f' = g.$
\label{uniformconvergencewithderivative}
\end{lemma}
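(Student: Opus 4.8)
The plan is to reduce the statement to a single-variable calculus fact via the fundamental theorem of calculus, being careful that $\reals_+$ is not compact. First I would fix an arbitrary base point, say $a = 1 \in \reals_+$, and write, for each $n$ and each $x \in \reals_+$, the identity $f_n(x) = f_n(1) + \int_1^x f_n'(t)\,dt$, which is valid because $f_n \in C^1(\reals_+)$ (the integral being interpreted with the usual sign convention when $x < 1$). The key point is that uniform convergence of $(f_n')$ to $g$ on $\reals_+$ in particular gives uniform convergence on the compact interval with endpoints $1$ and $x$, so we may pass to the limit under the integral sign: $\int_1^x f_n'(t)\,dt \to \int_1^x g(t)\,dt$. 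Combined with $f_n(1) \to f(1)$ and $f_n(x) \to f(x)$ (pointwise consequences of uniform convergence), this yields $f(x) = f(1) + \int_1^x g(t)\,dt$ for every $x \in \reals_+$.

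Next I would observe that $g$ is continuous on $\reals_+$: it is the uniform limit of the continuous functions $f_n'$, hence continuous. Therefore the right-hand side $x \mapsto f(1) + \int_1^x g(t)\,dt$ is, by the fundamental theorem of calculus, a $C^1$ function on $\reals_+$ whose derivative is $g$. Since this function equals $f$, we conclude $f \in C^1(\reals_+)$ and $f' = g$, which is exactly the claim.

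I do not expect any serious obstacle here; this is a standard result (a local statement, really, since differentiability at a point only involves behaviour near that point). The only mild care needed is that $\reals_+$ is an open interval rather than a closed bounded one, so one should phrase the interchange of limit and integral on each compact subinterval $[\min(1,x),\max(1,x)]$ rather than on all of $\reals_+$ at once; the hypothesis of \emph{uniform} convergence of $(f_n')$ on $\reals_+$ trivially supplies uniform convergence on every such subinterval, so the standard ``limit of integrals equals integral of limit'' lemma for uniformly convergent sequences applies without friction. An alternative route would be to quote the classical theorem on differentiation of uniformly convergent sequences directly (e.g.\ Theorem 7.17 in Rudin's \emph{Principles of Mathematical Analysis}), but writing out the short FTC argument keeps the paper self-contained.
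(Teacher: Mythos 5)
Your proposal is correct and takes essentially the same route as the paper: both write increments of $f_n$ as integrals of $f_n'$ (you use the global identity $f_n(x)=f_n(1)+\int_1^x f_n'(t)\,dt$, the paper the difference quotient $\frac{f(x+h)-f(x)}{h}=\frac{1}{h}\int_x^{x+h}g(t)\,dt$), pass to the limit under the integral via uniform convergence, and then recover $f'=g$ from the continuity of $g$ (you via the fundamental theorem of calculus, the paper via l'H\^opital's rule). The difference is purely cosmetic, so no further comment is needed.
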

\begin{proof}
Since $(f_n')$ is uniformly convergent to $g,$ then $g\in C(\reals_+).$ Furthermore, for fixed numbers $x\in\reals_+$ and $h\in (-x,\infty)$ we have 
\begin{gather*}
\frac{f(x+h) - f(x)}{h} = \lim_{n\rightarrow \infty}\ \frac{f_n(x+h) - f_n(x)}{h} = \lim_{n\rightarrow \infty}\ \frac{\int_x^{x+h}\ f_n'(t)\ dt}{h} = \frac{\int_x^{x+h}\ g(t)\ dt}{h}.
\end{gather*} 

\noindent
Consequently, we get
\begin{gather*}
\lim_{h\rightarrow 0}\ \frac{f(x+h) - f(x)}{h} = \lim_{h\rightarrow 0}\ \frac{\int_x^{x+h}\ g(t)\ dt}{h} = \lim_{h\rightarrow 0}\ g(x+h) = g(x),
\end{gather*} 

\noindent
where we have applied d'Hospital's rule. This concludes the proof. 
\end{proof}

\begin{thm}
$(\Schwartz,d_{\Schwartz})$ is a complete metric space. 
\end{thm}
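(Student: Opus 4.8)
The plan is to take an arbitrary $d_{\Schwartz}$-Cauchy sequence $(f_m)$ and build its limit out of the derivative sequences. First I would use the standard fact about translation-invariant metrics coming from countable separating families of seminorms: $d_{\Schwartz}$-Cauchyness of $(f_m)$ is equivalent to $(f_m)$ being Cauchy with respect to every $p_{\alpha,\beta}$ separately. Specializing to $\alpha=0$, for each fixed $\beta\in\naturals_0$ the sequence $(f_m^{(\beta)})$ is uniformly Cauchy on $\reals_+$; since each $f_m^{(\beta)}$ is bounded and continuous, completeness of the space of bounded continuous functions on $\reals_+$ under the supremum norm produces a bounded continuous limit $g_\beta$ with $f_m^{(\beta)}\longrightarrow g_\beta$ uniformly. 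I then set $f:=g_0$.

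The core of the argument is showing that $f\in C^{\infty}(\reals_+)$ with $f^{(\beta)}=g_\beta$ for every $\beta$, which I would do by induction on $\beta$ using Lemma \ref{uniformconvergencewithderivative}. The base step applies the lemma to $f_m\longrightarrow g_0=f$ and $f_m'\longrightarrow g_1$ (both uniformly), giving $f\in C^1(\reals_+)$ and $f'=g_1$. For the inductive step, assuming $f^{(\beta)}=g_\beta$, I apply the lemma to $f_m^{(\beta)}\longrightarrow g_\beta$ and $\big(f_m^{(\beta)}\big)'=f_m^{(\beta+1)}\longrightarrow g_{\beta+1}$ to obtain $g_\beta\in C^1(\reals_+)$ and $g_\beta'=g_{\beta+1}$, i.e. $f^{(\beta+1)}=g_{\beta+1}$.

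Next I would check that $f$ actually lies in $\Schwartz$. A sequence that is Cauchy in the seminorm $p_{\alpha,\beta}$ is in particular bounded in it, so $C_{\alpha,\beta}:=\sup_m p_{\alpha,\beta}(f_m)<\infty$ for every $\alpha\in\integers$, $\beta\in\naturals_0$; since $f_m^{(\beta)}(x)\to f^{(\beta)}(x)$ pointwise for each $x$, we get $|x^{\alpha}f^{(\beta)}(x)|=\lim_m|x^{\alpha}f_m^{(\beta)}(x)|\leqslant C_{\alpha,\beta}$, hence $p_{\alpha,\beta}(f)\leqslant C_{\alpha,\beta}<\infty$ and $f\in\Schwartz$. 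Finally, to confirm $f_m\longrightarrow f$ in $d_{\Schwartz}$ it suffices to show $p_{\alpha,\beta}(f_m-f)\to 0$ for each $\alpha,\beta$: given $\eps>0$, choose $N$ with $p_{\alpha,\beta}(f_m-f_n)<\eps$ for all $m,n\geqslant N$, fix $x$ and let $n\to\infty$ (pointwise convergence of $f_n^{(\beta)}$) to get $|x^{\alpha}(f_m^{(\beta)}(x)-f^{(\beta)}(x))|\leqslant\eps$, then take the supremum over $x\in\reals_+$.

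I do not expect a deep obstacle here; the one step that requires genuine care, rather than mere bookkeeping with quantifiers, is the passage from uniform convergence of the individual derivative sequences to smoothness of $f$ with the expected derivatives — uniform limits by themselves only guarantee continuity, and it is precisely the iterated use of Lemma \ref{uniformconvergencewithderivative} that justifies differentiating through the limit.
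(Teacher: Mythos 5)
Your proposal is correct and follows essentially the same route as the paper: extract uniform limits of the derivative sequences, apply Lemma \ref{uniformconvergencewithderivative} inductively to get $f\in C^{\infty}(\reals_+)$ with the expected derivatives, and then verify membership in $\Schwartz$ and seminorm convergence. The only (cosmetic) difference is that the paper works directly with the uniform limits of the weighted sequences $x\mapsto x^{\alpha}f_n^{(\beta)}(x)$, while you recover the same conclusions via boundedness of the Cauchy seminorm sequences and the standard ``let $n\to\infty$ inside the Cauchy estimate'' argument.
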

\begin{proof}
Suppose that $(f_n)$ is a $d_{\Schwartz}-$Cauchy sequence. This implies that for every $\alpha\in\integers$ and $\beta\in\naturals_0$ the sequence $(x\mapsto x^{\alpha}f_n^{(\beta)}(x))_{n\in\naturals}$ is a Cauchy sequence in the supremum norm (on $\reals_+$). Consequently, every such sequence is uniformly convergent to some $f_{\alpha,\beta} \in C^b(\reals_+).$ 

We put $f := f_{0,0},$ i.e., $f$ is a uniform limit of the sequence $(f_n).$ By Lemma \ref{uniformconvergencewithderivative} (and an elementary induction) we have $f\in C^{\infty}(\reals_+)$ and $f^{(\beta)} = f_{0,\beta}.$ Finally, the sequence $(x\mapsto x^{\alpha}f_n^{(\beta)}(x))_{n\in\naturals}$ is (by definition) uniformly convergent to $f_{\alpha,\beta},$ so
$$\forall_{x\in\reals_+}\ f_{\alpha,\beta}(x) = \lim_{n\rightarrow\infty}\ x^{\alpha}f_n^{(\beta)}(x) = x^{\alpha}f^{(\beta)}(x).$$

\noindent
This proves that $f\in \Schwartz$ and concludes the proof. 
\end{proof}

At this point we have shown that $\Schwartz$ is a Fr\'echet space. Encouraged by this fact we could hope that $\Schwartz$ is even a normed space but, unfortunately, this is not true and we will use the following result to prove it:

\begin{lemma}
There exists a function $\eta\in C^{\infty}(\reals)$ such that 
\begin{enumerate}
	\item $\text{supp}(\eta) = [-1,1],$
	\item $\eta(t) > 0$ for every $t\in (-1,1),$
	\item $\eta(0) = 1,$
	\item $\eta'(t) = 2(\eta(2t+1) - \eta(2t-1))$ for every $t\in\reals,$
	\item for $\beta \in \naturals$ and $t\in [-1,1]$ we have
	$$\eta^{(\beta)}(t) = 2^{\binom{\beta+1}{2}}\sum_{l=0}^{2^{\beta}}\ (-1)^{s(l)}\eta\left(2^{\beta}t + 2^{\beta} - 2l - 1\right),$$
	
	\noindent
	where $s(l)$ is the sum of the digits of $l$ when written in base $2$,
	\item for $\beta \in \naturals$ we have 
	\begin{gather}
	\eta^{(\beta)}(t_{\beta}) = 2^{\binom{\beta+1}{2}},
	\label{estimateonphibetasupremum}
	\end{gather}
	
	\noindent
	where $t_{\beta+1} := 2t_{\beta} + 1$ and $t_1 := -\frac{1}{2}.$
\end{enumerate}
\label{existenceofsuperfunction}
\end{lemma}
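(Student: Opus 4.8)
The plan is to construct $\eta$ explicitly as a lacunary-type series built from translated and rescaled copies of a single ``bump'', so that the self-similar identity in item (4) holds by design. The natural candidate is a function of the form $\eta(t) = \prod_{k=1}^{\infty} g(2^{-k}\,\text{something})$ — but rather than gamble on an infinite product, I would instead start from a fixed $C^\infty_c$ bump $\psi$ supported on $[-1,1]$ with $\psi(0)=1$, $\psi>0$ on $(-1,1)$, and then \emph{integrate the defining recursion}. That is: item (4) says $\eta' = 2(\eta(2\cdot+1) - \eta(2\cdot-1))$, i.e. $\eta$ is a fixed point of the operator $T\phi(t) := \int_{-1}^{t} 2\big(\phi(2s+1)-\phi(2s-1)\big)\,ds$. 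The integrand $\phi(2s+1)-\phi(2s-1)$ is supported, as a function of $s$, on $[-1,0]\cup[0,1] = [-1,1]$, and the two pieces have disjoint interiors meeting only at $s=0$; so $T\phi$ is supported in $[-1,1]$, and $T\phi(1) = \int_{-1}^{1}2\phi(2s+1)\,ds - \int_{-1}^{1}2\phi(2s-1)\,ds = \int_{-1}^{1}\phi - \int_{-1}^{1}\phi = 0$, which is what keeps the support bounded and makes the next iterate well-defined. I would look for $\eta$ as a uniform limit of iterates $T^n\phi_0$ (or directly as a convergent series $\sum$ of the "increments"), checking that $T$ is a contraction in sup-norm on the relevant space because of the factor structure — each application of $T$ replaces $\phi$ by an average of rescaled copies and the rescaling $s \mapsto 2s\pm 1$ compresses support, giving geometric decay of the successive differences in every $C^k$ norm.

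Once existence of a $C^\infty$ solution $\eta$ of (4) with $\supp\eta\subseteq[-1,1]$ is in hand, items (1)--(3) are normalization/positivity checks: multiply by a constant to arrange $\eta(0)=1$; positivity on $(-1,1)$ and the exact support equality $\supp\eta=[-1,1]$ should come from the series representation (every term is $\geqslant 0$ for an appropriate choice, and near $t=\pm 1$ exactly one term is active). Item (5) is then a clean induction on $\beta$: differentiate the closed form for $\eta^{(\beta)}$ using (4) applied to each summand $\eta(2^\beta t + 2^\beta - 2l - 1)$; the chain rule produces the factor $2^\beta$ (hence the exponent $\binom{\beta+1}{2} + \beta = \binom{\beta+2}{2}$ at the next stage), and each term splits into two terms with argument shifted by $\pm 1$ inside, doubling the number of summands from $2^\beta$ to $2^{\beta+1}$ and reindexing $l \mapsto 2l, 2l+1$ — this is exactly where the binary-digit-sum sign $(-1)^{s(l)}$ and the subtraction in (4) interlock, so the induction step is a careful but routine bookkeeping of indices. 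Item (6) follows by evaluating the formula in (5) at the point $t_\beta$: the recursion $t_{\beta+1}=2t_\beta+1$, $t_1=-\tfrac12$ is chosen precisely so that $2^\beta t_\beta + 2^\beta - 2l - 1 = 0$ for the single index $l$ with $s(l)=0$ (namely $l=0$) and lands outside $[-1,1]$, i.e. in the zero set of $\eta$, for every other $l$; since $\eta(0)=1$, the sum collapses to the single term $2^{\binom{\beta+1}{2}}$.

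I expect the main obstacle to be establishing the existence and smoothness of the fixed point $\eta$ rigorously — i.e. proving that the iteration $T^n\phi_0$ converges in $C^\infty_c([-1,1])$, which requires showing the increments $T^{n+1}\phi_0 - T^n\phi_0$ are summable in every seminorm $\sup|(\cdot)^{(k)}|$ simultaneously, uniformly in $n$. The delicate point is that $T$ does not obviously contract higher derivatives: differentiating $T\phi$ gives back $2(\phi(2\cdot+1)-\phi(2\cdot-1))$, which is $\phi$ at half-width but with an amplitude factor $2$, so a naive estimate loses as much as it gains. The resolution is to track the \emph{support width} together with the amplitude: after $n$ iterations the building blocks live on intervals of length $2^{-n+1}$, so on any fixed interval the contributions overlap with controlled multiplicity, and the $k$-th derivative of the $n$-th increment is $O(2^{nk} \cdot 2^{-n} \cdot (\#\text{terms}))$ which still decays once one observes that only $O(1)$ of the $2^n$ terms are supported near any given point. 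Making that overlap count precise — essentially the statement that $\sum_{l} \eta(2^n t + 2^n - 2l - 1)$ has boundedly many nonzero summands for each $t$ — is the technical heart; everything after that is the induction of items (5)--(6), which is elementary.
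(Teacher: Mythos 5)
The paper does not construct $\eta$ at all: points \textit{1.--5.} are simply quoted from \cite{AriasdeReyna} (Theorems 1 and 4 there), and only point \textit{6.} receives a proof, a two-line induction obtained by differentiating the two-scale identity of point \textit{4.} Your proposal instead undertakes to build $\eta$ from scratch as a fixed point of the linear operator $T\phi(t)=\int_{-1}^{t}2\bigl(\phi(2s+1)-\phi(2s-1)\bigr)\,ds$, and this is where there is a genuine gap, one you partly acknowledge yourself. Since $T$ is linear and $T0=0$, it cannot be a contraction (in sup-norm or any other complete metric) on a space containing $0$: Banach's theorem would force the fixed point to be unique, whereas you need a nonzero one. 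The honest formulation is convergence of a cascade-type iteration on the affine slice $\{\phi:\int\phi=1\}$, which $T$ does preserve, and that requires a quantitative decay estimate for $T$ on mean-zero functions in every $C^{k}$ seminorm. Your substitute heuristic --- that after $n$ iterations the ``building blocks live on intervals of length $2^{-n+1}$'' with bounded overlap --- is not correct: $\supp(T\phi)$ fills out essentially all of $[-1,1]$ whenever $\supp(\phi)$ does, so nothing shrinks under $T$; and even on the mean-zero slice $T$ is not a strict sup-norm contraction (a smooth mean-zero function close to $+1$ on $[-1,0]$ and $-1$ on $[0,1]$ shows the Lipschitz constant is at least $1$). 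Hence the existence, smoothness, exact support and positivity of $\eta$ --- which you correctly identify as the technical heart --- are not established by your sketch, and the route as described would fail. The efficient fixes are either the classical construction (infinite convolution of uniform densities on $[-2^{-k},2^{-k}]$, equivalently the Fourier-side product of $\mathrm{sinc}$ factors, which yields \textit{1.--4.} directly), or simply citing \cite{AriasdeReyna}, as the paper does.

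Two further remarks. Your induction plan for point \textit{5.} is essentially Theorem 4 of \cite{AriasdeReyna} and is fine in outline, though the reindexing $l\mapsto 2l,\,2l+1$ is only sketched. For point \textit{6.}, your collapse-of-the-sum argument needs $2^{\beta}t_{\beta}+2^{\beta}-1=0$, i.e.\ $t_{\beta}=2^{-\beta}-1$; the recursion as printed, $t_{\beta+1}=2t_{\beta}+1$ with $t_{1}=-\tfrac12$, gives instead $t_{\beta}=2^{\beta-2}-1$, which reaches the endpoint $1$ at $\beta=3$ and leaves $[-1,1]$ afterwards, where every derivative of $\eta$ vanishes. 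The intended recursion is clearly the reverse one, $t_{\beta}=2t_{\beta+1}+1$, and the paper's own induction uses exactly $\eta^{(\beta)}(2t_{\beta+1}+1)=\eta^{(\beta)}(t_{\beta})$; so this is a typo in the statement rather than a defect specific to your argument, and modulo it your evaluation of \textit{5.} at $t_{\beta}$ and the paper's induction for \textit{6.} amount to the same computation.
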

\begin{proof}
The existence of function $\eta$ satisfying points \textit{1. -- 4.} stems from Theorem 1 whereas point \textit{5.} stems from Theorem 4 in \cite{AriasdeReyna}.

As far as point \textit{6.} is concerned, we prove it by induction -- for $\beta = 1$ we have 
$$\eta'(t_1) = 2(\eta(2t_1 + 1) - \eta(2t_1 - 1)) = 2\eta(0) = 2.$$

\noindent
Next, assuming (\ref{estimateonphibetasupremum}) holds true for $\beta \in \naturals$ we observe that 
$$\eta^{(\beta+1)}(t_{\beta+1}) =  2^{\beta+1}(\eta^{(\beta)}(2t_{\beta+1}+1) - \eta^{(\beta)}(2t_{\beta+1}-1)) = 2^{\beta+1}\eta^{(\beta)}(t_{\beta}) 
= 2^{\beta+1} 2^{\binom{\beta+1}{2}} = 2^{\binom{\beta+2}{2}},$$

\noindent
which concludes the proof.
\end{proof}

\begin{thm}
$\Schwartz$ is not normable.
\label{Schwartzspacenotnormable}
\end{thm}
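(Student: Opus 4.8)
The strategy is the standard one for showing that a Fréchet space fails to be normable: invoke the Kolmogorov normability criterion, which says that a topological vector space is normable if and only if the origin has a bounded convex neighbourhood. Since the topology of $\Schwartz$ is generated by the countable family of seminorms $p_{\alpha,\beta}$, every neighbourhood of $0$ contains a basic set of the form $V_{p_{\alpha_1,\beta_1}}(\eps)\cap\ldots\cap V_{p_{\alpha_n,\beta_n}}(\eps)$; and because such a set is already convex, it suffices to show that \emph{no} such basic neighbourhood is topologically bounded. So the whole matter reduces to: given finitely many indices $(\alpha_k,\beta_k)$ and $\eps>0$, produce a seminorm $p_{\alpha,\beta}$ and a sequence of functions inside the basic neighbourhood on which $p_{\alpha,\beta}$ is unbounded.

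To build the unbounded sequence I would exploit the function $\eta$ from Lemma \ref{existenceofsuperfunction}, whose successive derivatives blow up: by \eqref{estimateonphibetasupremum}, $p_{0,\beta}(\eta)\geqslant |\eta^{(\beta)}(t_\beta)| = 2^{\binom{\beta+1}{2}}$, which grows super-exponentially in $\beta$. The idea is to choose $\beta$ much larger than all the $\beta_k$ appearing in the given basic neighbourhood, then form a translated/rescaled copy of $\eta$ — say $x\mapsto \eta(x - N)$ for a large integer $N$, or a dilate $x\mapsto \eta(\lambda x)$ — times a suitable fixed cutoff that moves the support into $\reals_+$, and multiply by a small scalar $c$. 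The scalar $c$ can be taken small enough that $p_{\alpha_k,\beta_k}$ of the resulting function is below $\eps$ for every $k=1,\dots,n$ (only finitely many constraints, and each involves derivatives of order $\beta_k<\beta$, where no blow-up has yet occurred relative to the normalization we pick). One then lets $\beta\to\infty$ (taking a sequence of such functions, one for each large $\beta$, with the scalars adjusted) so that some \emph{other} seminorm — concretely $p_{0,\beta}$ — is forced to diverge along the sequence even though the sequence stays inside the basic neighbourhood. A convergent, hence topologically bounded, sequence cannot do this, so the basic neighbourhood is not bounded.

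A cleaner bookkeeping variant, which I would actually prefer to write up, is this: fix the basic neighbourhood $U = V_{p_{\alpha_1,\beta_1}}(\eps)\cap\ldots\cap V_{p_{\alpha_n,\beta_n}}(\eps)$ and set $\beta^\ast := 1 + \max_k \beta_k$ and $\alpha^\ast := 1 + \max_k|\alpha_k|$. Take a fixed function $g\in\Schwartz$ supported in, say, $[1,2]$ and with $g^{(\beta^\ast)}\not\equiv 0$ (a shifted and scaled copy of $\eta$ works, since $\eta$ is $C^\infty$ with compact support and all derivatives of all orders nonzero somewhere). Then the sequence $g_m := \frac{\eps}{2}\cdot\frac{g}{1+\max_k p_{\alpha_k,\beta_k}(g)}$ — wait, this is constant; instead use $g_m(x) := c_m\, m^{\beta^\ast} g(mx)$ or a sum of $m$ bump translates — the point is to arrange $p_{\alpha_k,\beta_k}(g_m)<\eps$ for all $k$ and all $m$ while $p_{\alpha^\ast,\beta^\ast}(g_m)\to\infty$. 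Scaling $x\mapsto g(mx)$ multiplies $p_{\alpha,\beta}$ by roughly $m^{\beta-\alpha}$ (up to the region of support), so choosing the prefactor $c_m$ to kill the growth for the indices with $\beta\leqslant\max\beta_k$ while leaving genuine growth for the strictly larger order $\beta^\ast$ does the job; the finiteness of the index set is exactly what makes a single prefactor suffice.

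The main obstacle is the last balancing act: one must verify that a single scalar (or a single scaling parameter together with a scalar) can simultaneously satisfy all $n$ upper-bound constraints $p_{\alpha_k,\beta_k}(g_m)<\eps$ \emph{and} force divergence of a seminorm of higher order. This requires an explicit handle on how $p_{\alpha,\beta}$ of a dilate or translate of $g$ depends on the parameter, i.e. careful (but elementary) estimates using the Leibniz/chain rule and the compact support of $g$; the super-exponential growth in \eqref{estimateonphibetasupremum} gives ample room, so the estimates are not delicate, merely a matter of writing them out. Everything else — invoking Kolmogorov's criterion, reducing to basic neighbourhoods, noting their convexity, and contrasting with the topological boundedness of convergent sequences established in the text — is routine.
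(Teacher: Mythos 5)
Your overall route coincides with the paper's: invoke Kolmogorov's normability criterion, reduce to a basic neighbourhood $V_{p_{\alpha_1,\beta_1}}(\eps)\cap\ldots\cap V_{p_{\alpha_n,\beta_n}}(\eps)$, and exhibit inside it a sequence of rescaled compactly supported bumps built from the function $\eta$ of Lemma \ref{existenceofsuperfunction} on which some \emph{fixed} higher-order seminorm blows up (the paper takes $f=\eta(\cdot-2)$, dilates by $m^{1/(\beta_n+1)}$ with prefactor $m^{-1}$, and targets $p_{0,\beta_n+2}$). One remark on your first sketch: ``letting $\beta\to\infty$ with adjusted scalars so that $p_{0,\beta}$ diverges'' is not by itself a proof of unboundedness, since topological unboundedness requires a single fixed seminorm to be unbounded on the set; your second variant, which fixes one target index $(\alpha^*,\beta^*)$, is the right formulation.

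The genuine gap is in the balancing step of that second variant. As you yourself note, $p_{\alpha,\beta}(g(m\cdot))=m^{\beta-\alpha}p_{\alpha,\beta}(g)$ exactly, so the relevant exponent is $\beta-\alpha$, not $\beta$: the dilation pushes the support toward $0$, where the weight $x^{\alpha}$ with $\alpha<0$ inflates the constraint seminorms and with $\alpha>0$ suppresses the target one. Once the effective scalar (your $c_m m^{\beta^*}$) is chosen so that all $n$ constraints $p_{\alpha_k,\beta_k}(g_m)<\eps$ hold, it is forced to be at most $C\,m^{-d}$ with $d:=\max_k(\beta_k-\alpha_k)$, whence $p_{\alpha^*,\beta^*}(g_m)\leqslant C'\,m^{\beta^*-\alpha^*-d}$. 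With your choice $\beta^*=1+\max_k\beta_k$ and $\alpha^*=1+\max_k|\alpha_k|$ one always has $\beta^*-\alpha^*=\max_k\beta_k-\max_k|\alpha_k|\leqslant d$ (take $k$ realizing $\max_k\beta_k$), so the target seminorm remains bounded and no contradiction is reached; even in the simplest case $\alpha_1=\ldots=\alpha_n=0$ the exponent is $0$. Your phrase ``kill the growth for the indices with $\beta\leqslant\max_k\beta_k$ while leaving genuine growth for the strictly larger order $\beta^*$'' tacitly replaces the true exponent $\beta-\alpha$ by $\beta$, which fails precisely because $\alpha$ ranges over all of $\integers$ and negative $\alpha_k$ can make $\beta_k-\alpha_k$ much larger than $\beta_k$. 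The repair is cheap but must be made explicit: choose the target index with $\alpha^*\leqslant 0$ (say $\alpha^*=\min\{0,\min_k\alpha_k\}$, or $\alpha^*=0$ combined with a tempered dilation $m^{1/N}$) and $\beta^*$ strictly larger than $\max_k(\beta_k-\alpha_k)$, not merely than $\max_k\beta_k$. This interaction between the power weight and the dilation is the one delicate point of the whole argument; note that the paper's own step ``$x^{\alpha}\leqslant 3^{|\alpha|}$ for $x\in[1,3]$'' is likewise applied on the support of $f_m$ rather than of $f$, where it is only valid for $\alpha\geqslant 0$, so the same care is needed there.
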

\begin{proof}
By Theorem 1.39 in \cite{RudinFunctionalAnalysis}, p. 30 (or Theorem 5.77 in \cite{AliprantisBorder}, p. 206) the normability of a topological vector space like $\Schwartz$ is equivalent to the existence of a convex, bounded, open (and nonempty) neighbourhood of the origin. For the sake of contradiction, we suppose that such a neighbourhood exists in $\Schwartz$. Then it must contain a base set of the form 
$$V := V_{p_{\alpha_1,\beta_1}}(\eps)\cap \ldots\cap V_{p_{\alpha_n,\beta_n}}(\eps)$$ 

\noindent
for some $(\alpha_k)_{k=1}^n\subset \integers,\ (\beta_k)_{k=1}^n\subset \naturals_0,\ \eps>0,$ where 
$$V_{p_{\alpha_k,\beta_k}}(\eps) := \bigg\{f\in\Schwartz\ :\ p_{\alpha_k,\beta_k}(f) < \eps\bigg\}.$$

\noindent
Without loss of generality, we assume that $\beta_1\leqslant \beta_2\leqslant \ldots\leqslant \beta_n.$ 

Our approach boils down to the construction of a sequence $(f_m)\subset \Schwartz$ such that $(f_m) \subset V$ but 
\begin{gather}
\lim_{m\rightarrow \infty}\ p_{0,\beta_n+2}(f_m) = \infty.
\label{limitwithbetanplus2}
\end{gather}

\noindent
First, we define a function $f:\reals_+ \longrightarrow \reals$ by the formula $f(x) := \eta(x-2),$ where $\eta$ is the function from Lemma \ref{existenceofsuperfunction}. In particular, by point \textit{5.} in Lemma \ref{existenceofsuperfunction} we have 
$$\forall_{x\in[1,3]}\ f^{(\beta)}(x) = 2^{\binom{\beta+1}{2}}\sum_{l=0}^{2^{\beta}}\ (-1)^{s(l)}f\left(2^{\beta}x + 2^{\beta} - 2l + 1\right).$$

\noindent
This leads to the estimate 
\begin{gather}
\forall_{x\in\reals_+}\ |f^{(\beta)}(x)| \leqslant 2^{\binom{\beta+1}{2}} \left(2^{\beta} + 1\right) \|f\|_{\infty} < 2^{\binom{\beta+1}{2} + \beta + 1} \|f\|_{\infty}.
\label{estimateonfbeta}
\end{gather}

Next, we define a sequence of functions $f_m: \reals_+ \longrightarrow \reals$ by the formula 
$$f_m(x) := \frac{\eps}{3^{\max\{|\alpha_1|,\ldots,|\alpha_n|\}}\cdot 2^{\binom{\beta_n+1}{2} + \beta_n + 1} \cdot \|f\|_{\infty}}\cdot m^{-1}\cdot f\left(m^{\frac{1}{b_n+1}}x\right).$$

\noindent
Applying the estimate (\ref{estimateonfbeta}) we have
$$\forall_{m\in\naturals}\ \sup_{x\in\reals_+}\ |f_m^{(\beta)}(x)| < \frac{\eps}{3^{\max\{|\alpha_1|,\ldots,|\alpha_n|\}}\cdot 2^{\binom{\beta_n+1}{2} + \beta_n + 1}}\cdot m^{\frac{\beta}{\beta_n+1}-1}\cdot 2^{\binom{\beta+1}{2} + \beta + 1}.$$

\noindent
Since $\text{supp}(f) = [1,3]$ and $x^{\alpha} \leqslant 3^{|\alpha|}$ for $x\in[1,3]$ and $\alpha\in\integers$ we have
$$\forall_{m\in\naturals}\ \sup_{x\in\reals_+}\ |x^{\alpha}f_m^{(\beta)}(x)| < \eps\cdot \frac{3^{|\alpha|}}{3^{\max\{|\alpha_1|,\ldots,|\alpha_n|\}}} \cdot \frac{2^{\binom{\beta+1}{2} + \beta + 1}}{2^{\binom{\beta_n+1}{2} + \beta_n + 1}}\cdot m^{\frac{\beta}{\beta_n+1}-1}.$$

\noindent
Consequently, if $\alpha \in \{\alpha_1,\ldots,\alpha_n\}$ and $\beta \in \{\beta_1,\ldots,\beta_n\}$ then 
$$\forall_{m\in\naturals}\ \sup_{x\in\reals_+}\ |x^{\alpha}f_m^{(\beta)}(x)| < \eps$$

\noindent
which means that $(f_m) \subset V.$

We will now prove (\ref{limitwithbetanplus2}) and then explain how it results in nonormability of $\Schwartz$. By Lemma \ref{existenceofsuperfunction} we have
$$\forall_{\beta\in\naturals}\ f^{(\beta)}(t_{\beta}+2) = \eta^{(\beta)}(t_{\beta}) \geqslant 2^{\binom{\beta+1}{2}}.$$

\noindent
This implies that 
\begin{equation*}
\begin{split}
\forall_{\beta,m\in\naturals}\ \sup_{x\in\reals_+}\ |f_m^{(\beta)}(x)| &\geqslant f_m^{(\beta)}\left(\frac{t_{\beta}+2}{m^{\frac{1}{b_n+1}}}\right)
\geqslant \frac{\eps}{3^{\max\{|\alpha_1|,\ldots,|\alpha_n|\}}\cdot 2^{\binom{\beta_n+1}{2} + \beta_n + 1}\cdot \|f\|_{\infty}}\cdot m^{\frac{\beta}{\beta_n+1}-1}f^{(\beta)}(t_{\beta}+2) \\
&\geqslant \frac{\eps}{3^{\max\{|\alpha_1|,\ldots,|\alpha_n|\}}}\cdot \frac{2^{\binom{\beta+1}{2}}}{2^{\binom{\beta_n+1}{2} + \beta_n + 1}}\cdot \frac{m^{\frac{\beta}{\beta_n+1}-1}}{\|f\|_{\infty}},
\end{split}
\end{equation*}

\noindent
which in particular means that 
$$\lim_{m\rightarrow\infty}\ \sup_{x\in\reals_+}\ |f_m^{\beta_n+2}(x)| = \infty.$$

We are finally ready to finish off the proof. We have just established the existence of a sequence of functions $(f_m)\subset \Schwartz$ such that $(f_m)\subset V$ and limit (\ref{limitwithbetanplus2}) holds. This means that $V$ is not bounded, since otherwise (by Theorem 5.76 in \cite{AliprantisBorder}, p. 206 or Theorem 1.37(b) in \cite{RudinFunctionalAnalysis}, p. 28) the seminorm $p_{0,\beta_n+2}$ would be bounded on the sequence $(f_m)$. Consequently, we arrive at a contradiction since $V$ was supposed to be a subset of a bounded set. This concludes the proof.  
\end{proof}

Our final objective in this section is to find a dense subset of $\Schwartz$, which is relatively easier to work with than $\Schwartz$ as a whole. To this end, we will employ the following lemma:

\begin{lemma}
There exists a sequence of functions $(\theta_n) \subset \Schwartz$ such that for every $n\in\naturals,\ n\geqslant 2$ we have
\begin{enumerate}
	\item $\text{supp}(\theta_n) = \left[\frac{1}{n}, n+1\right],$
	\item $\theta_n(x) = 1$ for $\left[\frac{2}{n}, n\right],$
	\item $|\theta_n^{(\beta)}(x)| \leqslant n^{\beta}2^{\frac{(\beta+1)\beta}{2}}$ for every $x\in\reals_+$ and $\beta \in \naturals.$
\end{enumerate}
\label{sequencethetan}
\end{lemma}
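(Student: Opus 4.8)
The plan is to build $\theta_n$ as a product of two rescaled copies of a single smooth ``ramp'' $\Phi$ obtained from $\eta$: one copy rises from $0$ to $1$ across $[\tfrac1n,\tfrac2n]$, the other falls from $1$ back to $0$ across $[n,n+1]$, each is identically $0$ beyond its own transition interval, and their two plateaus at height $1$ overlap on $[\tfrac2n,n]$. Put $\Phi(x):=\int_{-1}^{x}\eta(t)\,dt$. By Lemma~\ref{existenceofsuperfunction}(1), $\Phi\in C^{\infty}(\reals)$, $\Phi$ together with all its derivatives vanishes on $(-\infty,-1]$, and $\Phi^{(\beta)}=\eta^{(\beta-1)}$ for $\beta\geqslant1$; by (2), $\Phi$ is strictly increasing on $(-1,1)$. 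Crucially $\Phi\equiv1$ on $[1,\infty)$: for $t\in[-1,0]$ one has $2t-1\leqslant-1$, so $\eta(2t-1)=0$ and $\eta'(t)=2\eta(2t+1)$ by (4), whence, substituting $u=2t+1$,
\[
\Phi(1)=\int_{-1}^{1}\eta(u)\,du=2\int_{-1}^{0}\eta(2t+1)\,dt=\int_{-1}^{0}\eta'(t)\,dt=\eta(0)-\eta(-1)=1,
\]
and $\Phi'=\eta$ vanishes on $(1,\infty)$. Finally set $L_n(x):=2nx-3$ and $R_n(x):=2n+1-2x$ (the affine bijections $[\tfrac1n,\tfrac2n]\to[-1,1]$ and $[n,n+1]\to[1,-1]$) and define $\theta_n(x):=\Phi(L_n(x))\,\Phi(R_n(x))$.

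First I would dispose of properties 1 and 2 together with membership in $\Schwartz$. Since $\Phi(s)=0$ exactly when $s\leqslant-1$ and $\Phi(s)>0$ for $s>-1$, we get $\theta_n(x)>0$ iff $\tfrac1n<x<n+1$, so $\supp(\theta_n)=[\tfrac1n,n+1]$; and for $n\geqslant2$ we have $\tfrac2n\leqslant1<n$, while on $[\tfrac2n,n]$ both $L_n(x)\geqslant1$ and $R_n(x)\geqslant1$, hence $\Phi(L_n(x))=\Phi(R_n(x))=1$ and $\theta_n\equiv1$ there. Moreover $\theta_n\in C^{\infty}(\reals)$ as a product of smooth functions and has compact support inside $[\tfrac1n,n+1]\subset\reals_+$, so each $x^{\alpha}\theta_n^{(\beta)}(x)$ is bounded and $\theta_n\in\Schwartz$.

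The substance is property 3. The key point is that for $n\geqslant2$ the two transition intervals $(\tfrac1n,\tfrac2n)$ and $(n,n+1)$ are disjoint, so on each of $(0,\tfrac1n]$, $[\tfrac1n,\tfrac2n]$, $[\tfrac2n,n]$, $[n,n+1]$, $[n+1,\infty)$ at least one of the two factors is locally constant, equal to $0$ or to $1$ together with all its derivatives. Hence $\theta_n^{(\beta)}\equiv0$ on $(0,\tfrac1n]\cup[\tfrac2n,n]\cup[n+1,\infty)$ for $\beta\geqslant1$; on $[\tfrac1n,\tfrac2n]$ the factor $\Phi(R_n(x))$ is $\equiv1$, so the product rule and chain rule give $\theta_n^{(\beta)}(x)=(2n)^{\beta}\Phi^{(\beta)}(L_n(x))=(2n)^{\beta}\eta^{(\beta-1)}(L_n(x))$, and similarly $\theta_n^{(\beta)}(x)=(-2)^{\beta}\eta^{(\beta-1)}(R_n(x))$ on $[n,n+1]$. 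Everything thus reduces to showing $\|\eta^{(k)}\|_{\infty}\leqslant 2^{\binom{k+1}{2}}$ for all $k\geqslant0$: then $|\theta_n^{(\beta)}(x)|\leqslant(2n)^{\beta}2^{\binom{\beta}{2}}=n^{\beta}2^{\binom{\beta+1}{2}}$ on $[\tfrac1n,\tfrac2n]$ (using $\binom{\beta}{2}+\beta=\binom{\beta+1}{2}$), and $|\theta_n^{(\beta)}(x)|\leqslant 2^{\beta}2^{\binom{\beta}{2}}=2^{\binom{\beta+1}{2}}\leqslant n^{\beta}2^{\binom{\beta+1}{2}}$ on $[n,n+1]$, which is property 3. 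For $k=0$ this reads $\|\eta\|_{\infty}\leqslant1$, which follows from Lemma~\ref{existenceofsuperfunction}(2),(3),(4): at an interior maximum $t^{*}$ of $\eta$ one has $\eta'(t^{*})=0$, so $\eta(2t^{*}+1)=\eta(2t^{*}-1)$ by (4), and inspecting whether $2t^{*}\pm1$ lie inside $\supp\eta=[-1,1]$ forces $t^{*}=0$ via the positivity in (2), so $\|\eta\|_{\infty}=\eta(0)=1$. For $k\geqslant1$ I would use Lemma~\ref{existenceofsuperfunction}(5): the arguments $2^{k}t+2^{k}-2l-1$, $l=0,\dots,2^{k}$, of its summands form an arithmetic progression of common difference $2$ while $\supp\eta$ has length $2$, so for each fixed $t$ at most one summand is nonzero, giving $|\eta^{(k)}(t)|\leqslant 2^{\binom{k+1}{2}}\|\eta\|_{\infty}=2^{\binom{k+1}{2}}$.

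I expect the main difficulty to be cosmetic rather than conceptual: the bound in property 3 is stated with no slack whatsoever, and this is exactly why the ramp $\Phi$ must be used with \emph{no} normalising constant and why one genuinely needs the two sharp identities $\int_{-1}^{1}\eta=1$ and $\|\eta\|_{\infty}=1$. The remaining ingredients---the Leibniz and chain rule bookkeeping, the disjointness of the transition intervals, and the routine check that a compactly supported smooth function on $\reals_+$ lies in $\Schwartz$---are straightforward.
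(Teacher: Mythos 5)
Your construction is correct, and it reaches the stated bound with the required equality-tight constants: the identities $\int_{-1}^{1}\eta=1$ and $\|\eta\|_{\infty}=\eta(0)=1$ (your maximum argument via the functional equation is sound, since $\eta'>0$ on $(-1,0)$ and $\eta'<0$ on $(0,1)$), together with the observation that in Lemma~\ref{existenceofsuperfunction}(5) the arguments form a progression of step $2$ so at most one summand is nonvanishing, do give $\|\eta^{(k)}\|_{\infty}\leqslant 2^{\binom{k+1}{2}}$, and the chain-rule factors $(2n)^{\beta}$ and $(-2)^{\beta}$ on the two disjoint transition intervals then yield exactly $n^{\beta}2^{\binom{\beta+1}{2}}$. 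The route is genuinely different from the paper's. The paper imports the Fabius function $\theta$ from an external reference, defines $\theta_n$ piecewise (ramp $\theta(nx-1)$ on $[\frac1n,\frac2n]$, plateau $1$, ramp $\theta(n+1-x)$ on $[n,n+1]$, zero outside), and gets the derivative bound by an easy induction on the self-similarity $\theta'(x)=2\theta(2x)$ together with $0\leqslant\theta\leqslant1$; smoothness at the gluing points and the bound on all of the relevant interval are left implicit. You instead stay entirely inside Lemma~\ref{existenceofsuperfunction}: the ramp is the antiderivative $\Phi$ of $\eta$, the cutoff is the product $\Phi(L_n)\Phi(R_n)$, so smoothness is automatic and no new external facts are needed, at the cost of having to prove the two sharp normalizations of $\eta$ yourself. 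In effect the two building blocks are the same object up to affine reparametrization (your $\Phi$ plays the role of the Fabius ramp), so conceptually the constructions are cousins; the paper's is shorter given the cited properties, yours is more self-contained and makes the tightness of the constant $n^{\beta}2^{\frac{(\beta+1)\beta}{2}}$ completely transparent.
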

\begin{proof}
Let $\theta:[0,1]\longrightarrow[0,1]$ be the Fabius function (see \cite{Fabius}) and define the sequence of functions $(\theta_n)\subset\Schwartz$ by 
\begin{gather*}
\theta_n(x) := \left\{\begin{array}{cc}
\theta(nx - 1) & \text{for }\ x\in\left[\frac{1}{n}, \frac{2}{n}\right],\\
1 & \text{for }\ x\in\left[\frac{2}{n}, n\right],\\
\theta(n+1 - x) & \text{for }\ x\in[n,n+1],\\
0 & \text{for }\ x\not\in\left[\frac{1}{n}, n+1\right].
\end{array}\right.
\end{gather*}

\noindent
Since $\theta'(x) = 2\theta(2x)$ for $x\in\left[0,\frac{1}{2}\right],$ an easy induction shows that for every $\beta\in\naturals$ we have 
$$\forall_{x\in\left[0,\frac{1}{2}\right]}\ |\theta^{(\beta)}(x)| \leqslant 2^{\frac{(\beta+1)\beta}{2}}.$$

\noindent
Consequently, we have
$$\forall_{x\in\reals_+}\ |\theta_n^{(\beta)}(x)| \leqslant n^{\beta} 2^{\frac{(\beta+1)\beta}{2}}$$

\noindent
which concludes the construction.
\end{proof}

\begin{thm}
The set 
$$\mathcal{S}_c(\reals_+) := \bigg\{f \in \Schwartz\ :\ \text{supp}(f)\text{ is compact}\bigg\}$$

\noindent
is dense in $\Schwartz.$
\label{densesubsetinSchwartz}
\end{thm}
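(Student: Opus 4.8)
The plan is to cut off an arbitrary $f\in\Schwartz$ by the functions $\theta_n$ from Lemma \ref{sequencethetan} and show that $f\theta_n\to f$ in $\Schwartz$. First I would note that $f\theta_n\in\Schwartzc$: the product is smooth and $\supp(f\theta_n)\subset\supp(\theta_n)=\left[\tfrac1n,n+1\right]$, a compact subset of $\reals_+$, so every continuous function $x\mapsto x^\alpha (f\theta_n)^{(\beta)}(x)$ is bounded on it and vanishes outside it; hence all the seminorms $p_{\alpha,\beta}(f\theta_n)$ are finite. It then remains to prove that $p_{\alpha,\beta}\bigl(f(\theta_n-1)\bigr)\to 0$ for every $\alpha\in\integers$, $\beta\in\naturals_0$.

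Next I would exploit the shape of $\theta_n$: since $\theta_n\equiv 1$ on $\left[\tfrac2n,n\right]$, the function $f(\theta_n-1)$ and all its derivatives vanish there, so its support lies in $\left(0,\tfrac2n\right]\cup[n,\infty)$, and for $n\geqslant 2$ these two pieces are disjoint. On each piece I would expand by the Leibniz rule,
\[
\bigl(f(\theta_n-1)\bigr)^{(\beta)} = f^{(\beta)}(\theta_n-1) + \sum_{k=1}^{\beta}\binom{\beta}{k} f^{(\beta-k)}\,\theta_n^{(k)},
\]
and estimate term by term, using $0\leqslant\theta_n\leqslant 1$ together with the bound $\bigl|\theta_n^{(k)}(x)\bigr|\leqslant n^k\,2^{\frac{(k+1)k}{2}}$ from Lemma \ref{sequencethetan}.

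The decisive point is that the rapid decay of $f$ at \emph{both} ends of $\reals_+$ supplies extra powers of $x$ that outweigh the factor $n^k$. On $\left(0,\tfrac2n\right]$ I would write $\bigl|x^\alpha f^{(\beta-k)}(x)\bigr| = x^{k+1}\bigl|x^{\alpha-k-1}f^{(\beta-k)}(x)\bigr| \leqslant M_{\alpha-k-1,\beta-k}\,x^{k+1}\leqslant M_{\alpha-k-1,\beta-k}\bigl(\tfrac2n\bigr)^{k+1}$, so the $k$-th term is $O(1/n)$, and the $k=0$ term is handled the same way via $\bigl|x^\alpha f^{(\beta)}(x)\bigr|\leqslant M_{\alpha-1,\beta}\,x$. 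On $[n,\infty)$ I would instead extract $x^{-(k+1)}$ and use $\bigl|x^\alpha f^{(\beta-k)}(x)\bigr| = x^{-(k+1)}\bigl|x^{\alpha+k+1}f^{(\beta-k)}(x)\bigr|\leqslant M_{\alpha+k+1,\beta-k}\,n^{-(k+1)}$, which after multiplication by $n^k\,2^{\frac{(k+1)k}{2}}$ is again $O(1/n)$; the $k=0$ term uses $\bigl|x^\alpha f^{(\beta)}(x)\bigr|\leqslant M_{\alpha+1,\beta}/x\leqslant M_{\alpha+1,\beta}/n$. Since the supremum over $\reals_+$ equals the larger of the two regional suprema (the function being zero in between), summing the finitely many Leibniz terms gives $p_{\alpha,\beta}\bigl(f(\theta_n-1)\bigr)\leqslant C(\alpha,\beta)/n\to 0$.

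Finally, as every seminorm of $f\theta_n-f$ tends to $0$, the sequence $(f\theta_n)$ converges to $f$ in $\Schwartz$, and since $(f\theta_n)\subset\Schwartzc$ this proves density. The only genuinely delicate bookkeeping is matching, for each $k$, the correct shifted seminorm $p_{\alpha\mp(k+1),\beta-k}(f)$ to the growth rate $n^k$ of $\theta_n^{(k)}$ on the two transition intervals; everything else is a routine estimate.
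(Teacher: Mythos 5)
Your proposal is correct and follows essentially the same route as the paper: cut off $f$ with the functions $\theta_n$ of Lemma \ref{sequencethetan}, expand by the Leibniz rule on the transition regions, and beat the $n^{k}$ growth of $\theta_n^{(k)}$ by the decay of $f$ at $0$ and $+\infty$, yielding an $O(1/n)$ bound on each seminorm. The only difference is cosmetic bookkeeping (you extract the power $x^{\pm(k+1)}$ term by term, while the paper uniformly uses $x^{\beta+1}$ via its estimates \eqref{importantestimate1}--\eqref{importantestimate2}), and you in fact treat the $k=0$ term $f^{(\beta)}(\theta_n-1)$ slightly more carefully than the paper's displayed computation.
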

\begin{proof}
Fix $f\in\Schwartz.$ We prove that the sequence of functions $(\theta_n\cdot f)$ converges to $f$ in $\Schwartz,$ i.e.,
$$\forall_{\substack{\alpha\in\integers,\\ \beta\in\naturals_0}}\ \lim_{n\rightarrow\infty}\ p_{\alpha,\beta}(\theta_n\cdot f - f) = 0.$$

Firstly, we fix $\alpha\in\integers$ and $\beta\in\naturals_0$. Since $f\in\Schwartz$ then there exists a constant $M_1>0$ such that 
\begin{gather}
\forall_{n\in\naturals}\ \forall_{\substack{k=0,\ldots,\beta\\ x\in\left(0,\frac{2}{n}\right]}}\ |x^{\alpha}f^{(k)}(x)| \leqslant M_1 x^{\beta+1} \leqslant M_1 \left(\frac{2}{n}\right)^{\beta+1}.
\label{importantestimate1}
\end{gather}

\noindent
Similarly, we argue the existence of a constant $M_2>0$ such that
\begin{gather}
\forall_{n\in\naturals}\ \forall_{\substack{k=0,\ldots,\beta\\ x\in[n,\infty)}}\ |x^{\alpha}f^{(k)}(x)| \leqslant \frac{M_2}{x^{\beta+1}} \leqslant \frac{M_2}{n^{\beta+1}}.
\label{importantestimate2}
\end{gather}

Next, let us note that for every $\beta\in\naturals_0$ and every $x\in\left[\frac{1}{n},\frac{2}{n}\right]\cup[n,n+1]$ we have
\begin{gather*}
(\theta_n\cdot f)^{(\beta)}(x) = \sum_{k=0}^{\beta-1}\ \binom{\beta}{k} \theta_n^{(\beta-k)}(x)f^{(k)}(x) + f^{(\beta)}(x)
\end{gather*}

\noindent
so
\begin{equation}
\begin{split}
|x^{\alpha}(\theta_n\cdot f - f)^{(\beta)}(x)| &\leqslant \sum_{k=0}^{\beta-1}\ \binom{\beta}{k} |\theta_n^{(\beta-k)}(x)| |x^{\alpha}f^{(k)}(x)| \\
&\stackrel{\text{Lemma } \ref{sequencethetan}}{\leqslant} \sum_{k=0}^{\beta-1}\ \binom{\beta}{k} n^{\beta-k}2^{\frac{(\beta-k+1)(\beta-k)}{2}} |x^{\alpha}f^{(k)}(x)|\\
&\leqslant n^{\beta} 2^{\frac{(\beta+1)\beta}{2}} \sum_{k=0}^{\beta-1}\ \binom{\beta}{k} |x^{\alpha}f^{(k)}(x)|\\
&\stackrel{\eqref{importantestimate1},\ \eqref{importantestimate2}}{\leqslant} n^{\beta} 2^{\frac{(\beta+1)\beta}{2}} \left(M_1\left(\frac{2}{n}\right)^{\beta+1} + \frac{M_2}{n^{\beta+1}}\right) \sum_{k=0}^{\beta-1}\ \binom{\beta}{k} \\
&= \frac{2^{\frac{(\beta+1)\beta}{2}} (2^{\beta+1}M_1 + M_2) (2^{\beta} - 1)}{n}
\end{split}
\label{maincomputationforthetanf}
\end{equation}

\noindent
Finally, we have
\begin{equation*}
\begin{split}
\sup_{x\in\reals_+}\ |x^{\alpha}(\theta_n\cdot f - f)^{(\beta)}(x)| &\leqslant \sup_{x\in\left(0,\frac{1}{n}\right]}\ |x^{\alpha}f^{(\beta)}(x)| + \sup_{x\in\left[\frac{1}{n},\frac{2}{n}\right]\cup\left[n,n+1\right]}\ |x^{\alpha}(\theta_n\cdot f - f)^{(\beta)}(x)| + \sup_{x\in[n+1,\infty)}\ |x^{\alpha}f^{(\beta)}(x)| \\
&\stackrel{\eqref{importantestimate1}, \eqref{importantestimate2}, \eqref{maincomputationforthetanf}}{\leqslant} M_1 \left(\frac{2}{n}\right)^{\beta+1} + \frac{2^{\frac{(\beta+1)\beta}{2}} (2^{\beta+1}M_1 + M_2) (2^{\beta} - 1)}{n} + \frac{M_2}{n^{\beta+1}},
\end{split}
\end{equation*}

\noindent
which converges to zero as $n\rightarrow \infty.$ This concludes the proof. 
\end{proof}

\section{Mellin convolution}
\label{section:Mellinconvolution}

The previous section focused on proving topological properties of $\Schwartz$ such as the fact that it is a Fr\'echet space. In the current section we want to enhance our understanding of $\Schwartz$ by studying the algebraic aspect of this space. Thus, we begin by introducing the \textit{Mellin convolution} operator $\star : \Schwartz \times \Schwartz \longrightarrow \Schwartz$ with the formula:
\begin{gather}
\forall_{x\in \reals_+}\ f\star g(x) := \int_0^{\infty}\ y^{-1} f\left(xy^{-1}\right) g(y)\ dy.
\label{Mellinconvolution}
\end{gather}

\noindent
This formula is well-known in the literature concerning Mellin transform -- see Definition 4 in \cite{ButzerJansche} or Chapter 8.3 in \cite{BhattaDebnath}. The novelty introduce in this paper is the domain and codomain of the Mellin convolution as well as the study of the algebraic properties of this operator. 

Looking at the formula \eqref{Mellinconvolution}, the question that springs to mind is whether Mellin convolution is well-defined? In other words, does $f\star g$ define a Schwartz function? Prior to giving a positive answer to this question we show that given two functions $f,g \in \Schwartz,$ the value $f\star g(x)$ as defined by \eqref{Mellinconvolution} is finite for every value $x\in \reals_+$ and that $f\star g\in C^{\infty}(\reals_+).$

\begin{thm}
If $f,g\in \Schwartz,\ \alpha \in\integers$ and $\beta\in\naturals_0$ then 
\begin{gather}
\int_0^{\infty}\ y^{-(\alpha+1)} f^{(\beta)}\left(xy^{-1}\right) g(y)\ dy
\label{thisshouldbefinite}
\end{gather}

\noindent
is well-defined, i.e., finite for every $x\in \reals_+.$ Furthermore, the equality 
\begin{gather}
\frac{d}{dx}\int_0^{\infty}\ y^{-(\alpha+1)} f^{(\beta)}\left(xy^{-1}\right) g(y)\ dy = \int_0^{\infty}\ y^{-(\alpha+2)} f^{(\beta+1)}\left(xy^{-1}\right) g(y)\ dy
\label{derivativeunderintegral}
\end{gather}

\noindent
holds, which implies that $f\star g \in C^{\infty}(\reals_+).$
\label{convolutionisCinfty}
\end{thm}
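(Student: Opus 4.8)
The plan is to first show the integral \eqref{thisshouldbefinite} is finite for each fixed $x\in\reals_+$, then to derive \eqref{derivativeunderintegral} from the classical theorem on differentiation under the integral sign, and finally to conclude $f\star g\in C^\infty(\reals_+)$ by a short induction. Everything rests on one elementary remark: for every $N\in\naturals_0$ and all $z>0$ we have $|f^{(\beta)}(z)|\leqslant p_{N,\beta}(f)\,z^{-N}$ (because $p_{N,\beta}(f)=\sup_{z>0}|z^N f^{(\beta)}(z)|$), and similarly $|g(y)|\leqslant p_{M,0}(g)\,y^{-M}$; combined with the global bounds $|f^{(\beta)}|\leqslant p_{0,\beta}(f)$ and $|g|\leqslant p_{0,0}(g)$, these tame the integrand near $y=0$ (where $xy^{-1}$ is large, so $f^{(\beta)}(xy^{-1})$ is tiny) and near $y=\infty$ (where $g$ is tiny).

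For finiteness, fix $x\in\reals_+$ and split $(0,\infty)=(0,1]\cup[1,\infty)$. On $(0,1]$ I would bound $|f^{(\beta)}(xy^{-1})|\leqslant p_{N,\beta}(f)\,x^{-N}y^{N}$ with $N$ large enough that $N-\alpha-1\geqslant 0$ and use $|g(y)|\leqslant p_{0,0}(g)$, making the integrand bounded there. On $[1,\infty)$ I would use $|f^{(\beta)}(xy^{-1})|\leqslant p_{0,\beta}(f)$ and $|g(y)|\leqslant p_{M,0}(g)\,y^{-M}$ with $M$ large enough that $-(\alpha+1)-M<-1$, making the integrand dominated by an integrable function there; adding the two estimates gives the claim. (A slicker route: the substitution $z=xy^{-1}$ turns \eqref{thisshouldbefinite} into $x^{-\alpha}\int_0^\infty z^{\alpha-1}f^{(\beta)}(z)\,g(xz^{-1})\,dz$, whose absolute value is at most $x^{-\alpha}p_{0,0}(g)\int_0^\infty|z^{\alpha-1}f^{(\beta)}(z)|\,dz$, finite by Lemma \ref{Schwartzfunctionintegrable} applied with $p=1$ and $B=\{f\}$.)

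For \eqref{derivativeunderintegral}, fix $x_0\in\reals_+$ and a compact interval $[a,b]$ with $0<a<x_0<b$. By the chain rule the $x$-derivative of the integrand is $y^{-(\alpha+2)}f^{(\beta+1)}(xy^{-1})g(y)$, and rerunning the two-region estimate above with $(\alpha,\beta)$ replaced by $(\alpha+1,\beta+1)$ yields an integrable majorant that is \emph{independent of $x\in[a,b]$}: on $(0,1]$ the factor $x^{-N}y^{N}$ is $\leqslant a^{-N}y^{N}$, and on $[1,\infty)$ the factor $f^{(\beta+1)}(xy^{-1})$ is bounded by $p_{0,\beta+1}(f)$ with no reference to $x$. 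Since the integrand itself is integrable by the first part, the dominated-convergence form of the differentiation-under-the-integral theorem gives \eqref{derivativeunderintegral} on $(a,b)$, hence, $x_0$ being arbitrary, on all of $\reals_+$.

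Finally, $f\star g(x)=\int_0^\infty y^{-1}f(xy^{-1})g(y)\,dy$ is the case $\alpha=\beta=0$ of \eqref{thisshouldbefinite}, and an induction on $k$ using \eqref{derivativeunderintegral} with $\alpha=\beta=k$ gives $(f\star g)^{(k)}(x)=\int_0^\infty y^{-(k+1)}f^{(k)}(xy^{-1})g(y)\,dy$; each such integral is finite by the first part and again differentiable by \eqref{derivativeunderintegral} (so in particular continuous), whence $f\star g\in C^\infty(\reals_+)$. I expect the only genuine care to lie in the differentiation step — verifying the dominating function can be taken uniform in $x$ near $x_0$ — and this succeeds precisely because $f^{(\beta)}$ and $g$ are globally bounded, so the lone $x$-dependent factor $x^{-N}$ is harmless on $[a,b]$.
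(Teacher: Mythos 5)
Your proposal is correct and follows essentially the same route as the paper: dominate the integrand pointwise, justify \eqref{derivativeunderintegral} by dominated convergence (the paper carries out the mean value theorem plus DCT argument by hand along an arbitrary sequence $h_n\to 0$, which is exactly what the differentiation-under-the-integral theorem you cite packages), and conclude $f\star g\in C^{\infty}(\reals_+)$ by the induction you describe. The only cosmetic difference is that the paper avoids your two-region splitting by using the single bound $|f^{(\beta)}(z)|\leqslant M_1 z^{-(\alpha+1)}$ -- available because $\alpha$ ranges over all of $\integers$ -- so the power of $y$ cancels and finiteness reduces to $g\in L^1(\reals_+)$ via Lemma \ref{Schwartzfunctionintegrable}, much in the spirit of your `slicker route'.
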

\begin{proof}
To begin with, since $f\in\Schwartz$ then there exists a constant $M_1 > 0$ such that 
$$\forall_{z\in\reals_+}\ |f^{(\beta)}(z)| < M_1\cdot z^{-(\alpha+1)}.$$

\noindent
Consequently, for every $x\in\reals_+$ we have
\begin{gather*}
\left|\int_0^{\infty}\ y^{-(\alpha+1)} f^{(\beta)}\left(xy^{-1}\right) g(y)\ dy\right| < M_1 x^{-(\alpha+1)} \int_0^{\infty}\ |g(y)|\ dy < \infty, 
\end{gather*}

\noindent
since $g\in L^1(\reals_+)$ by Lemma \ref{Schwartzfunctionintegrable}. 

We proceed with the second part of the theorem and prove that formula \eqref{derivativeunderintegral} holds at an arbitrary point $x\in\reals_+.$ We begin by choosing $\eps>0$ such that $[x-\eps,x+\eps] \subset \reals_+$ and investigating the limit
$$\lim_{n\rightarrow \infty}\ \int_0^{\infty}\ y^{-(\alpha+1)}\cdot \frac{f^{(\beta)}\left(\frac{x+h_n}{y}\right) - f^{(\beta)}\left(\frac{x}{y}\right)}{h_n}\cdot g(y)\ dy,$$

\noindent
where $(h_n)\subset (-\eps,\eps)$ is an arbitrary sequence convergent to zero. By the mean value theorem we have 
$$\forall_{\substack{n\in\naturals\\ y\in\reals_+}}\ \left|\frac{f^{(\beta)}\left(\frac{x+h_n}{y}\right) - f^{(\beta)}\left(\frac{x}{y}\right)}{\frac{h_n}{y}}\right| \leqslant \sup_{z\in(-\eps,\eps)}\ \left|f^{(\beta+1)}\left(\frac{x+z}{y}\right)\right| < M_2$$

\noindent
for some constant $M_2 > 0.$ Consequently, we obtain
$$\forall_{\substack{n\in\naturals\\ y\in\reals_+}}\ \bigg|\frac{f^{(\beta)}\left(\frac{x+h_n}{y}\right) - f^{(\beta)}\left(\frac{x}{y}\right)}{h_n}\bigg| \leqslant M_2 y^{-1}.$$

\noindent
and since $y\mapsto y^{-(\alpha+2)}g(y)$ is integrable over $\reals_+$ (by Lemma \ref{Schwartzfunctionintegrable}) then by the dominated convergence theorem we have 
$$\lim_{n\rightarrow \infty}\ \int_0^{\infty}\ y^{-(\alpha+1)}\cdot \frac{f^{(\beta)}\left(\frac{x+h_n}{y}\right) - f^{(\beta)}\left(\frac{x}{y}\right)}{h_n}\cdot g(y)\ dy = \int_0^{\infty}\ y^{-(\alpha+2)} f^{(\beta+1)}\left(xy^{-1}\right) g(y)\ dy.$$

\noindent
Since the sequence $(h_n)$ was chosen arbitrarily, we conclude that \eqref{derivativeunderintegral} holds true.
\end{proof}

\begin{cor}
If $f,g\in\Schwartz$ then $f\star g\in \Schwartz.$
\end{cor}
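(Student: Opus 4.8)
The plan is to establish membership in $\Schwartz$ by verifying the defining estimates of \eqref{definitionofSchwartz}: for every $\alpha\in\integers$ and $\beta\in\naturals_0$ the quantity $p_{\alpha,\beta}(f\star g) = \sup_{x\in\reals_+}|x^\alpha (f\star g)^{(\beta)}(x)|$ is finite. By Theorem \ref{convolutionisCinfty} we already know $f\star g\in C^\infty(\reals_+)$ and, by iterating \eqref{derivativeunderintegral} $\beta$ times starting from $f\star g$ itself,
$$(f\star g)^{(\beta)}(x) = \int_0^\infty y^{-(\beta+1)} f^{(\beta)}(xy^{-1})\, g(y)\, dy.$$
So the task reduces to bounding $\sup_{x}|x^\alpha \int_0^\infty y^{-(\beta+1)} f^{(\beta)}(xy^{-1}) g(y)\, dy|$ independently of $x$.

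First I would push the factor $x^\alpha$ through the integral by writing $x^\alpha = (xy^{-1})^\alpha y^\alpha$, obtaining
$$x^\alpha (f\star g)^{(\beta)}(x) = \int_0^\infty \left(xy^{-1}\right)^\alpha f^{(\beta)}(xy^{-1})\cdot y^{\alpha-\beta-1} g(y)\, dy.$$
Since $f\in\Schwartz$, the function $z\mapsto z^\alpha f^{(\beta)}(z)$ is bounded on $\reals_+$ by some constant $C_{\alpha,\beta}$ (this is just $p_{\alpha,\beta}(f)$). Therefore
$$\left|x^\alpha (f\star g)^{(\beta)}(x)\right| \leqslant C_{\alpha,\beta}\int_0^\infty y^{\alpha-\beta-1}|g(y)|\, dy,$$
and the right-hand side is a finite constant independent of $x$ provided $y\mapsto y^{\alpha-\beta-1}g(y)$ is integrable on $\reals_+$ — which is exactly the content of Lemma \ref{Schwartzfunctionintegrable} applied with the exponent $\alpha-\beta-1\in\integers$, $\beta'=0$ and $p=1$. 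Thus $p_{\alpha,\beta}(f\star g)<\infty$ for all $\alpha,\beta$, so $f\star g\in\Schwartz$.

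I do not anticipate a genuine obstacle here: the corollary is essentially bookkeeping on top of Theorem \ref{convolutionisCinfty}. The only point requiring a little care is the justification of the iterated differentiation formula for $(f\star g)^{(\beta)}$ — one must check that each application of \eqref{derivativeunderintegral} is legitimate, but this follows by a straightforward induction since the integrand at each stage has the same structural form $y^{-(k+1)}f^{(k)}(xy^{-1})g(y)$ to which Theorem \ref{convolutionisCinfty} directly applies (with $\alpha=k$). The other mild subtlety is that the exponent $\alpha-\beta-1$ can be any integer, positive or negative, but Lemma \ref{Schwartzfunctionintegrable} is stated for arbitrary $\alpha\in\integers$, so it covers this case without modification.
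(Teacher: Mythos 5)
Your proposal is correct and follows essentially the same route as the paper's own proof: both take the formula $(f\star g)^{(\beta)}(x)=\int_0^{\infty}y^{-(\beta+1)}f^{(\beta)}(xy^{-1})g(y)\,dy$ from Theorem \ref{convolutionisCinfty}, split $x^{\alpha}=(xy^{-1})^{\alpha}y^{\alpha}$ so that $|z^{\alpha}f^{(\beta)}(z)|\leqslant p_{\alpha,\beta}(f)$ absorbs the $f$-part, and then invoke Lemma \ref{Schwartzfunctionintegrable} for the integrability of $y\mapsto y^{\alpha-\beta-1}g(y)$. Your extra remark about justifying the iterated differentiation by induction (applying \eqref{derivativeunderintegral} with $\alpha=k$ at stage $k$) is a point the paper leaves implicit, but it is exactly the intended reading.
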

\begin{proof}
By Theorem \ref{convolutionisCinfty} we already know that $f\star g\in C^{\infty}(\reals_+)$ and 
$$\forall_{\beta\in\naturals_0}\ \frac{d^{\beta}}{dx^{\beta}} f\star g(x) = \int_0^{\infty}\ y^{-(\beta+1)} f^{(\beta)}\left(xy^{-1}\right) g(y)\ dy.$$

\noindent
Furthermore, since $f\in\Schwartz$ then for every $\alpha \in \integers, \beta \in \naturals_0$ there exists a constant $M > 0$ such that
$$\forall_{z\in\reals_+}\ |f^{(\beta)}(z)| < M z^{-\alpha},$$

\noindent
Hence we have
\begin{gather*}
\sup_{x\in\reals_+}\ |x^{\alpha} (f\star g)^{(\beta)}(x)| = \sup_{x\in\reals_+}\ \bigg|x^{\alpha} \int_0^{\infty}\ y^{-(\beta+1)} f^{(\beta)}\left(xy^{-1}\right) g(y)\ dy\bigg| \leqslant M \int_0^{\infty}\ y^{\alpha-(\beta+1)} |g(y)|\ dy < \infty,
\end{gather*}

\noindent
where the last inequality follows from the fact that $y\mapsto y^{\alpha-(\beta+1)} g(y)$ is integrable over $\reals_+$ (by Lemma \ref{Schwartzfunctionintegrable}). This concludes the proof. 
\end{proof}

At this point we might get a feeling that the space $\Schwartz$ is a ``natural habitat'' for the Mellin convolution to live in. However, this is by far not the end of the story. Recall that a Fr\'echet space $\Ffamily,$ together with multiplication $\odot:\Ffamily\times \Ffamily \longrightarrow \Ffamily$ is called a Fr\'echet algebra if 
\begin{itemize}
	\item for every $c \in \reals$ and $f,g\in \Ffamily:$
	$$c\cdot(f\odot g) = (c\cdot f)\odot g = f\odot(c\cdot g),$$
	
	\item multiplication is associative, i.e. for every $f,g,h\in\Ffamily:$
	$$(f\odot g)\odot h = f\odot (g\odot h),$$
	
	\item multiplication is distributive, i.e. for every $f,g,h\in\Ffamily:$
	\begin{equation*}
	\begin{split}
	f\odot(g+h) = f\odot g + f\odot h,\\
	(f+g)\odot h = f\odot h + g\odot h,
	\end{split}
	\end{equation*}
	
	\item multiplication is continuous. 
\end{itemize}

\noindent
In addition, if $f\odot g = g\odot f$ for every $f,g \in\Ffamily$ then $(\Ffamily,\odot)$ is called a commutative Fr\'echet algebra.

\begin{thm}
Fr\'echet space $\Schwartz$, together with Mellin convolution, is a commutative Fr\'echet algebra.
\end{thm}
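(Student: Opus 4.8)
The plan is to verify the four Fr\'echet algebra axioms in turn, having already established in the preceding corollary that $\star$ maps $\Schwartz \times \Schwartz$ into $\Schwartz$, and then to check commutativity. Scalar homogeneity is immediate: pulling a constant $c \in \reals$ out of the integral in \eqref{Mellinconvolution} shows $c\cdot(f\star g) = (c\cdot f)\star g = f\star(c\cdot g)$. Distributivity in each argument is equally routine by linearity of the integral, since $(f+g)\star h(x) = \int_0^\infty y^{-1}(f+g)(xy^{-1})h(y)\,dy$ splits as a sum. So the first substantive point is associativity.

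For associativity I would compute $((f\star g)\star h)(x)$ by substituting the definition twice, obtaining a double integral $\int_0^\infty \int_0^\infty z^{-1} y^{-1} f(xz^{-1}y^{-1}) g(y) h(z)\,dy\,dz$ (after an inner substitution to move the inner convolution variable), then change variables $u = yz$ to recognize it as $(f\star(g\star h))(x)$. The key subtlety is justifying the use of Fubini's theorem: one needs absolute integrability of the double integrand, which follows from Lemma \ref{Schwartzfunctionintegrable} applied to suitable powers — e.g. $f$ is bounded, $g$ and $h$ are in $L^1(\reals_+)$, and one bounds $|f(xz^{-1}y^{-1})| \leqslant \|f\|_\infty$ to get a finite product of $L^1$ norms. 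Commutativity is then a one-line change of variables $y \mapsto xy^{-1}$ in \eqref{Mellinconvolution}, using $dy \mapsto x y^{-2}\,dy$, which turns $\int_0^\infty y^{-1} f(xy^{-1}) g(y)\,dy$ into $\int_0^\infty y^{-1} g(xy^{-1}) f(y)\,dy$.

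The main obstacle, and the part deserving the most care, is continuity of $\star$ as a map $\Schwartz \times \Schwartz \to \Schwartz$. Since $\Schwartz$ is a metrizable (hence first-countable) topological vector space and $\star$ is bilinear, it suffices to show separate boundedness on bounded sets, or more directly to show sequential continuity: if $f_n \to f$ and $g_n \to g$ in $\Schwartz$, then $f_n \star g_n \to f \star g$. Because a convergent sequence is topologically bounded (as noted in the excerpt), I would fix $\alpha \in \integers$, $\beta \in \naturals_0$ and estimate
\begin{gather*}
p_{\alpha,\beta}(f_n \star g_n - f\star g) \leqslant p_{\alpha,\beta}((f_n - f)\star g_n) + p_{\alpha,\beta}(f \star (g_n - g)).
\end{gather*}
For the first term, using the formula $(f_n - f)\star g_n^{(\beta)}$-style computation from the corollary, $p_{\alpha,\beta}((f_n-f)\star g_n) = \sup_x |x^\alpha \int_0^\infty y^{-(\beta+1)} (f_n - f)^{(\beta)}(xy^{-1}) g_n(y)\,dy|$; bounding $|x^\alpha (f_n-f)^{(\beta)}(xy^{-1})| \leqslant y^\alpha p_{\alpha,\beta}(f_n - f)$ and invoking Lemma \ref{Schwartzfunctionintegrable} to bound $\int_0^\infty y^{\alpha - (\beta+1)} |g_n(y)|\,dy$ uniformly in $n$ (since $\{g_n\}$ is bounded), this term is $\leqslant C\, p_{\alpha,\beta}(f_n - f) \to 0$. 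The second term is handled symmetrically, using boundedness of the single function $f$ together with $p_{\alpha,\beta}(g_n - g) \to 0$ — here one must be slightly careful to choose the right power of $y$ so that $y \mapsto y^{\alpha-(\beta+1)}(g_n - g)(y)$'s integral is controlled by a finite sum of seminorms of $g_n - g$, again via Lemma \ref{Schwartzfunctionintegrable}. Assembling these estimates gives continuity and completes the proof.
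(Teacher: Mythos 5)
Your proposal is correct and follows essentially the same route as the paper: scalar homogeneity and distributivity dismissed as immediate, associativity via Fubini and a change of variables (with the absolute integrability you supply being the justification the paper leaves to the cited Fubini theorem), commutativity by a single substitution, and continuity proved as sequential continuity using the splitting $(f_n-f)\star g_n + f\star(g_n-g)$ together with Lemma \ref{Schwartzfunctionintegrable} and topological boundedness of convergent sequences. The only (cosmetic) difference is in the second term: the paper substitutes to shift the integral onto $f^{(\beta)}$ and extracts the factor $p_{\alpha-\beta,0}(g_n-g)$, whereas you bound $\int_0^{\infty} y^{\alpha-(\beta+1)}|(g_n-g)(y)|\,dy$ directly by finitely many seminorms of $g_n-g$ (via the splitting at $1$ underlying Lemma \ref{Schwartzfunctionintegrable}, whose statement alone gives only boundedness), and both versions yield the same conclusion.
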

\begin{proof}
It is trivial to see that for every $c\in\reals$ and $f,g \in \Schwartz$ we have
$$c \cdot(f\star g) = (c\cdot f)\star g = f \star (c\cdot g).$$

\noindent
Furthermore, distributivity of $\star$ is also obvious, so it remains to prove that it is associative, continuous and commutative. We accomplish this task in three steps.

\textbf{Step 1.} Let $f,g,h \in \Schwartz.$ In order to see that $\star$ is associative, it suffices to use Theorem B.3.1 in \cite{DeitmarEchterhoff}, p. 289 (or Theorem 8.8 in \cite{Rudin}, p. 164) and calculate that 
\begin{equation*}
\begin{split}
\forall_{x\in\reals_+}\ (f\star g)\star h(x) &= \int_0^{\infty}\ y^{-1}f\star g\left(xy^{-1}\right)h(y)\ dy \\
&= \int_0^{\infty}\ y^{-1} \left(\int_0^{\infty}\ z^{-1}f\left(xy^{-1}z^{-1}\right)g(z)\ dz\right) h(y)\ dy \\
&= \int_0^{\infty}\ \int_0^{\infty}\ y^{-1}z^{-1}f\left(xy^{-1}z^{-1}\right)g(z)h(y)\ dz dy \\
&\stackrel{z = uy^{-1}}{=} \int_0^{\infty}\ \int_0^{\infty}\ y^{-1}u^{-1}f\left(xu^{-1}\right)g\left(uy^{-1}\right)h(y)\ du dy \\
&= \int_0^{\infty}\ u^{-1}f\left(xu^{-1}\right)\left(\int_0^{\infty}\ y^{-1}g\left(uy^{-1}\right)h(y)\ dy\right) du \\
&= \int_0^{\infty}\ u^{-1}f\left(xu^{-1}\right)g\star h(u)\ du = f\star (g\star h)(x).
\end{split}
\end{equation*}

\textbf{Step 2.} Suppose that $d_{\Schwartz}(f_n,f) \longrightarrow 0$ and $d_{\Schwartz}(g_n,g) \longrightarrow 0.$ We prove that 
\begin{gather}
d_{\Schwartz}(f_n\star g_n,f\star g) \longrightarrow 0,
\label{convolutionconverges}
\end{gather}

\noindent
which establishes continuity of Mellin convolution. 

Fix $\alpha\in\integers, \beta \in\naturals_0$. By Lemma \ref{Schwartzfunctionintegrable} there exists a constant $M>0$ such that 
\begin{gather}
\forall_{n\in\naturals}\ \int_0^{\infty}\ y^{\alpha-(\beta+1)} |g_n(y)|\ dy < M \hspace{0.4cm}\text{and}\hspace{0.4cm} \int_0^{\infty}\ z^{\alpha-1} |f^{(\beta)}(z)|\ dz < M.
\label{verygoodconstantbyLemma1}
\end{gather}

\noindent
Consequently, we have
\begin{equation*}
\begin{split}
p_{\alpha,\beta}&(f_n\star g_n - f\star g) = \sup_{x\in\reals_+} \left|x^{\alpha}\left(\int_0^{\infty}\ y^{-1}f_n(xy^{-1})g_n(y)\ dy - \int_0^{\infty}\ y^{-1}f(xy^{-1})g(y)\ dy\right)^{(\beta)}\right| \\
&\stackrel{\text{Theorem } \ref{convolutionisCinfty}}{=} \sup_{x\in\reals_+} x^{\alpha}\left|\int_0^{\infty}\ y^{-(\beta+1)}f_n^{(\beta)}(xy^{-1})g_n(y)\ dy - \int_0^{\infty}\ y^{-(\beta+1)}f^{\beta}(xy^{-1})g(y)\ dy\right| \\
&= \sup_{x\in\reals_+} x^{\alpha}\left|\int_0^{\infty}\ y^{-(\beta+1)}(f_n^{(\beta)}(xy^{-1}) - f^{(\beta)}(xy^{-1}))g_n(y)\ dy - \int_0^{\infty}\ y^{-(\beta+1)}f^{(\beta)}(xy^{-1})(g(y) - g_n(y))\ dy\right| \\
&\leqslant \int_0^{\infty}\ y^{-(\beta+1)} |g_n(y)| \sup_{x\in\reals_+} x^{\alpha}|f_n^{(\beta)}(xy^{-1}) - f^{(\beta)}(xy^{-1})| \ dy + \sup_{x\in\reals_+}\ x^{\alpha} \int_0^{\infty}\ y^{-(\beta+1)} |f^{(\beta)}(xy^{-1})| |g(y) - g_n(y)|\ dy \\
&\leqslant p_{\alpha,\beta}(f_n-f) \int_0^{\infty}\ y^{\alpha-(\beta+1)} |g_n(y)|\ dy + \sup_{x\in\reals_+}\ x^{\alpha-\beta} \int_0^{\infty}\ z^{-(\beta+1)} |f^{(\beta)}(z^{-1})| |g(xz) - g_n(xz)|\ dz \\
&\leqslant p_{\alpha,\beta}(f_n-f) \int_0^{\infty}\ y^{\alpha-(\beta+1)} |g_n(y)|\ dy + \int_0^{\infty}\ z^{-(\beta+1)} |f^{(\beta)}(z^{-1})| \sup_{x\in\reals_+}\ x^{\alpha-\beta} |g(xz) - g_n(xz)|\ dz \\
&\leqslant p_{\alpha,\beta}(f_n-f) \int_0^{\infty}\ y^{\alpha-(\beta+1)} |g_n(y)|\ dy + p_{\alpha-\beta,0}(g-g_n)\int_0^{\infty}\ z^{-(\alpha+1)} |f^{(\beta)}(z^{-1})|\ dz \\
&= p_{\alpha,\beta}(f_n-f) \int_0^{\infty}\ y^{\alpha-(\beta+1)} |g_n(y)|\ dy + p_{\alpha-\beta,0}(g-g_n) \int_0^{\infty}\ z^{\alpha-1} |f^{(\beta)}(z)|\ dz\\
&\stackrel{\eqref{verygoodconstantbyLemma1}}{\leqslant} M(p_{\alpha,\beta}(f_n-f) + p_{\alpha-\beta,0}(g-g_n))
\end{split}
\end{equation*}

\noindent
Since $d_{\Schwartz}(f_n,f) \longrightarrow 0$ implies $p_{\alpha,\beta}(f_n-f) \longrightarrow 0$ and $d_{\Schwartz}(g_n,g) \longrightarrow 0$ implies $p_{\alpha-\beta,0}(g-g_n)\longrightarrow 0$ we conclude that $p_{\alpha,\beta}(f_n\star g_n - f\star g) \longrightarrow 0.$ Values $\alpha$ and $\beta$ were chosen arbitrarily, which finally implies \eqref{convolutionconverges}.

\textbf{Step 3.} To see that the Mellin convolution is commutative, we compute 
\begin{equation*}
\begin{split}
\forall_{x\in\reals_+}\ f\star g(x) &= \int_0^{\infty}\ y^{-1}f\left(xy^{-1}\right)g(y)\ dy \stackrel{y = zx}{=} \int_0^{\infty}\ z^{-1}f\left(z^{-1}\right)g(zx)\ dz \\
&\stackrel{z = u^{-1}}{=} \int_0^{\infty}\ u^{-1}f(u)g(xu^{-1})\ du = g\star f(x).
\end{split}
\end{equation*}
\end{proof}

Before we proceed with the study of the structure space of the Schwartz class, let us pause for a moment and deduce a counterpart for Young's convolution inequality.

\begin{thm}
Let $p,q,r \in [1,\infty)$ be such that 
$$\frac{1}{r} = \frac{1}{p} + \frac{1}{q} - 1.$$

\noindent
For every $f,g \in \Schwartz$ we have
\begin{gather}
\left(\int_0^{\infty}\ y^{-1} |f\star g(y)|^r\ dy\right)^{\frac{1}{r}} \leqslant \left(\int_0^{\infty}\ y^{-1} |f(y)|^p\ dy\right)^{\frac{1}{p}} \left(\int_0^{\infty}\ y^{-1} |g(y)|^q\ dy\right)^{\frac{1}{q}}.
\label{mellinyoung}
\end{gather}
\end{thm}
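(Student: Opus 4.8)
The plan is to reduce the Mellin--Young inequality to the classical Young convolution inequality on the locally compact abelian group $(\reals_+,\cdot)$ equipped with its Haar measure $\frac{dy}{y}$, via the standard logarithmic change of variables that turns $(\reals_+,\cdot)$ into $(\reals,+)$. Concretely, I would introduce the isometric substitution $y = e^t$ and, for a function $h$ on $\reals_+$, set $\widetilde{h}(t) := h(e^t)$. Under this substitution $\int_0^\infty y^{-1}|h(y)|^p\,dy = \int_{\reals}|\widetilde{h}(t)|^p\,dt$, so the weighted $L^p$-norms appearing in \eqref{mellinyoung} become the ordinary $L^p(\reals)$-norms of $\widetilde f$, $\widetilde g$ and $\widetilde{f\star g}$.

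The key computation is that the Mellin convolution becomes the ordinary additive convolution under this substitution. Starting from \eqref{Mellinconvolution}, writing $x = e^s$, $y = e^t$ and using $xy^{-1} = e^{s-t}$ together with $y^{-1}\,dy = dt$, one obtains
$$f\star g(e^s) = \int_{\reals}\ f\!\left(e^{s-t}\right) g\!\left(e^t\right)\,dt = \int_{\reals}\ \widetilde f(s-t)\,\widetilde g(t)\,dt = (\widetilde f \ast \widetilde g)(s),$$
i.e. $\widetilde{f\star g} = \widetilde f \ast \widetilde g$. Thus the left-hand side of \eqref{mellinyoung} is exactly $\|\widetilde f \ast \widetilde g\|_{L^r(\reals)}$.

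It then remains to invoke the classical Young inequality $\|\widetilde f \ast \widetilde g\|_{L^r(\reals)} \leqslant \|\widetilde f\|_{L^p(\reals)}\,\|\widetilde g\|_{L^q(\reals)}$, valid precisely when $\tfrac1r = \tfrac1p + \tfrac1q - 1$ (see, e.g., Folland's real analysis text), translating the three norms back to the weighted integrals on $\reals_+$ gives \eqref{mellinyoung}. The only points needing a word of care are measurability and finiteness so that Fubini/Tonelli applies and the classical Young inequality is genuinely in force: but since $f,g\in\Schwartz$, Lemma~\ref{Schwartzfunctionintegrable} guarantees that $y\mapsto y^{-1}|f(y)|^p$ and $y\mapsto y^{-1}|g(y)|^q$ are integrable for all $p,q\geqslant 1$ (after a harmless shift of $\alpha$ to absorb the weight $y^{-1}$), so $\widetilde f\in L^p(\reals)$ and $\widetilde g\in L^q(\reals)$, and the substitution is legitimate. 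No serious obstacle is expected; the mild bookkeeping around the change of variables and the index relation is the only thing to get right, and one could alternatively give a self-contained proof by splitting $y^{-1} = y^{-1/p'}y^{-1/q'}y^{-1/r}$ (with conjugate exponents) and applying Hölder three times directly to \eqref{Mellinconvolution}, which avoids citing the classical Young inequality at all.
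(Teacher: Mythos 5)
Your proposal is correct and follows essentially the same route as the paper: the substitution $y=e^t$ turning the Mellin convolution into the classical convolution ($f\star g(e^x) = (f\circ\exp)\star_c(g\circ\exp)(x)$), followed by the classical Young inequality and the change of variables back, with Lemma \ref{Schwartzfunctionintegrable} guaranteeing the needed integrability. The only difference is cosmetic (notation $\widetilde h$ versus $F=f\circ\exp$, plus your optional remark about a direct three-fold H\"older argument), so no further comparison is needed.
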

\begin{proof}
Let $F := f \circ \exp$ and observe that 
$$\|F\|_p^p = \int_{-\infty}^{\infty}\ \left|f\left(e^x\right)\right|^p\ dx \stackrel{e^x = y}{=} \int_0^{\infty}\ y^{-1} |f(y)|^p\ dy < \infty$$

\noindent
by Lemma \ref{Schwartzfunctionintegrable}. An analogous estimate holds for $G := g \circ \exp$. Next, we observe that 
\begin{equation*}
\begin{split}
\forall_{x\in\reals}\ f\star g\left(e^x\right) &= \int_0^{\infty}\ y^{-1}f\left(e^x y^{-1}\right)g(y)\ dy = \int_0^{\infty}\ y^{-1} F(x - \ln(y))g(y)\ dy \\
&\stackrel{\ln(y) = z}{=} \int_{\reals}\ F(x - z)G(z)\ dz = F\star_c G(x),
\end{split}
\end{equation*}

\noindent
where $\star_c$ is the classical convolution. By classical Young's convolution inequality (Theorem 3.9.4 in \cite{Bogachev}, p. 205-206) we have 
$$\|(f\star g)\circ \exp\|_r = \|F\star_c G\|_r \leqslant \|F\|_p \|G\|_q,$$

\noindent
which translates to
$$\left(\int_{\reals}\ \left|f\star g\left(e^x\right)\right|^r\ dx\right)^{\frac{1}{r}} \leqslant \left(\int_{\reals}\ \left|f\left(e^x\right)\right|^p\ dx\right)^{\frac{1}{p}} \left(\int_{\reals}\ \left|g\left(e^x\right)\right|^q\ dx\right)^{\frac{1}{q}}.$$

\noindent
The substitution $y = e^x$ finally proves \eqref{mellinyoung}.
\end{proof}

\section{Structure space of the Schwartz class}
\label{section:structurespaceandMellintransfrom}

Our goal in this section is to study the structure space $\Delta(\Schwartz, \star)$ of the Schwartz class $\Schwartz.$ This is a space of all nonzero, linear and continuous  functionals $m : \Schwartz \longrightarrow \reals,$ which are multiplicative:
$$\forall_{f,g\in\Schwartz}\ m(f\star g) = m(f)m(g).$$

\noindent
We endow the structure space $\Delta(\Schwartz,\star)$ with weak* topology and aim to prove that it is homeomorphic to $\reals$ (with the standard topology). This is highly reminiscent of the classical result that $\Delta(L^1(G),\star_c)$, where $G$ is a locally compact, abelian group and $\star_c$ is the classical convolution, is homeomorphic to the dual group $\widehat{G}$ (see Theorems 2.7.2 and 2.7.5 in \cite{Kaniuth}, p. 89-92 or Theorem 4.2 in \cite{Folland}, p. 88). 

The strategy we adopt is to first prove that $\Delta(\Schwartz, \star)$ is in bijective correspondence with $\reals$ (Theorem \ref{mellinbijection}) and only then establish that these spaces are in fact homeomorphic (Theorems \ref{omegaiscontinuous} and \ref{omegaisopen}). Without further ado, we set out on our journey by showing the following technical results:

\begin{lemma}
Let $x,y\in \reals_+.$ If $g\in \Schwartz$, then 
\begin{gather}
D_yg \star D_xg = g \star D_{xy}g,
\label{DxgstarDyg}
\end{gather}

\noindent
where for every $v\in \reals_+$ the operator $D_v : \Schwartz \longrightarrow \Schwartz$ is given by 
\begin{gather*}
\forall_{u\in \reals_+}\ D_vf(u) := f\left(uv^{-1}\right).
\end{gather*}
\label{powerproperty}
\end{lemma}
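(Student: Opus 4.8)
The plan is to verify \eqref{DxgstarDyg} by direct computation, unwinding both sides of the identity according to the definition \eqref{Mellinconvolution} of the Mellin convolution and the definition of the dilation operator $D_v$, and then matching the two resulting integrals by an appropriate change of variables. Since everything in sight is well-defined and finite by Lemma \ref{Schwartzfunctionintegrable} (all the integrands are products of dilated Schwartz functions against $y^{-1}$, hence integrable), there are no convergence subtleties to worry about, and the whole argument reduces to careful bookkeeping with substitutions.

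Concretely, I would first compute the left-hand side: for $u \in \reals_+$,
\begin{equation*}
\begin{split}
(D_yg \star D_xg)(u) &= \int_0^{\infty}\ t^{-1} (D_yg)\left(u t^{-1}\right) (D_xg)(t)\ dt \\
&= \int_0^{\infty}\ t^{-1} g\left(u t^{-1} y^{-1}\right) g\left(t x^{-1}\right)\ dt.
\end{split}
\end{equation*}
Then I would compute the right-hand side:
\begin{equation*}
(g \star D_{xy}g)(u) = \int_0^{\infty}\ s^{-1} g\left(u s^{-1}\right) g\left(s (xy)^{-1}\right)\ ds.
\end{equation*}
The substitution $s = t x^{-1}$ (so $ds = x^{-1}\, dt$ and $s^{-1} = x t^{-1}$) transforms the right-hand integral into $\int_0^{\infty} x t^{-1} g(u x^{-1} t^{-1} \cdot x) g(t x^{-1} (xy)^{-1}) \cdot x^{-1}\, dt$; after simplifying the arguments one should land exactly on the left-hand expression above. (I would double-check the arithmetic of the arguments carefully, since it is easy to mismatch an $x$ versus $x^{-1}$; the correct substitution may instead be $s = ty$ or some variant, which I would pin down during the write-up.)

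The only genuine point requiring a word of justification is that the iterated integrals are legitimate, i.e.\ that one may apply the substitution under the integral sign and that the expressions are finite — but this is immediate from Lemma \ref{Schwartzfunctionintegrable}, exactly as in the associativity computation in the proof that $\Schwartz$ is a Fr\'echet algebra. Thus I do not expect any real obstacle here; the ``hard part,'' such as it is, is merely getting the change of variables bookkeeping right so that the dilation parameters $x$, $y$, and $xy$ fall into place. This lemma is plainly being set up to show that the functionals $m$ in the structure space satisfy a power-law / exponential-type relation (the map $v \mapsto m(D_v g)$ will turn out to be multiplicative in $v$), which is the first step toward identifying $\Delta(\Schwartz,\star)$ with $\reals$.
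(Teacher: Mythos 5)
Your approach is exactly the paper's: write out both sides via the definitions and match them by a single change of variables (the paper substitutes $v = wy^{-1}$ in the left-hand integral). The one slip is your primary choice of substitution: with $s = tx^{-1}$ the right-hand integrand becomes $t^{-1}g\left(uxt^{-1}\right)g\left(tx^{-2}y^{-1}\right)\,dt$, which does not match the left-hand side; the correct substitution is the alternative you yourself flagged, $s = ty$, which gives $s^{-1} = t^{-1}y^{-1}$, $ds = y\,dt$ and $s(xy)^{-1} = tx^{-1}$, so the right-hand side becomes $\int_0^{\infty} t^{-1}g\left(ut^{-1}y^{-1}\right)g\left(tx^{-1}\right)\,dt$, exactly the left-hand side. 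With that correction the argument is complete and coincides with the paper's proof.
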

\begin{proof}
For every $u \in \reals_+$ we have
\begin{equation*}
\begin{split}
D_yg \star D_xg(u) &= \int_0^{\infty}\ v^{-1} D_yg\left(uv^{-1}\right) D_xg(v)\ dv \\
&= \int_0^{\infty}\ v^{-1}g\left(u(vy)^{-1}\right) g\left(vx^{-1}\right)\ dv\\
&\stackrel{v = wy^{-1}}{=} \int_0^{\infty}\ w^{-1}g\left(uw^{-1}\right) g\left(w(xy)^{-1}\right)\ dw = g\star D_{xy}g(u),
\end{split}
\end{equation*}
	
\noindent
which concludes the proof.
\end{proof}

\begin{lemma}
For every $g\in\Schwartz$ the function $x\mapsto D_xg$ is continuous. 
\label{continuityofDxg}
\end{lemma}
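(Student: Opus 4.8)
The statement to establish is that, for a fixed $g\in\Schwartz$, the map $x\mapsto D_xg$ from $\reals_+$ into the metric space $(\Schwartz,d_{\Schwartz})$ is continuous; by the characterization of convergence recalled in Section~\ref{section:Schwartzfunctions}, this amounts to showing that for every $x_0\in\reals_+$, every $\alpha\in\integers$ and every $\beta\in\naturals_0$ one has $p_{\alpha,\beta}(D_xg-D_{x_0}g)\to 0$ as $x\to x_0$. The plan is to do this by a direct computation, the only preliminary being the elementary chain-rule identity $(D_tg)^{(\beta)}(u)=t^{-\beta}g^{(\beta)}(ut^{-1})$, valid for all $t,u\in\reals_+$.

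Fixing $u\in\reals_+$, I would view $t\mapsto (D_tg)^{(\beta)}(u)=t^{-\beta}g^{(\beta)}(ut^{-1})$ as a $C^1$ function on $\reals_+$ and apply the fundamental theorem of calculus to get
$$(D_xg)^{(\beta)}(u)-(D_{x_0}g)^{(\beta)}(u)=\int_{x_0}^{x}\Bigl(-\beta\,t^{-\beta-1}g^{(\beta)}(ut^{-1})-u\,t^{-\beta-2}g^{(\beta+1)}(ut^{-1})\Bigr)\,dt.$$
Multiplying by $u^{\alpha}$ and bounding the integrand, one obtains
$$p_{\alpha,\beta}(D_xg-D_{x_0}g)\leqslant\left|\int_{x_0}^{x}t^{\alpha-\beta-1}\,dt\right|\bigl(\beta\,p_{\alpha,\beta}(g)+p_{\alpha+1,\beta+1}(g)\bigr).$$
Since $t\mapsto t^{\alpha-\beta-1}$ is continuous on $\reals_+$, the right-hand side is a continuous function of $x$ that vanishes at $x=x_0$, so it tends to $0$ as $x\to x_0$; as $\alpha$ and $\beta$ were arbitrary, the continuity of $x\mapsto D_xg$ follows.

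The one point requiring care --- and what I would regard as the main (mild) obstacle --- is ensuring that the bound on the integrand is genuinely uniform in $u$, since the factor $u^{\alpha}$ is unbounded. The crucial manoeuvre is to rewrite $u^{\alpha}g^{(\beta)}(ut^{-1})=t^{\alpha}(ut^{-1})^{\alpha}g^{(\beta)}(ut^{-1})$, which yields $|u^{\alpha}g^{(\beta)}(ut^{-1})|\leqslant t^{\alpha}p_{\alpha,\beta}(g)$ and, analogously, $|u^{\alpha+1}g^{(\beta+1)}(ut^{-1})|\leqslant t^{\alpha+1}p_{\alpha+1,\beta+1}(g)$; combining these, the integrand above is dominated uniformly in $u$ by $t^{\alpha-\beta-1}\bigl(\beta\,p_{\alpha,\beta}(g)+p_{\alpha+1,\beta+1}(g)\bigr)$, which is exactly what produces the displayed estimate. (An alternative, less self-contained route would be to transfer the problem to the real line via $G:=g\circ\exp$, under which $D_x$ becomes translation by $\ln x$ on a space of rapidly decreasing functions, and invoke continuity of translation there; but the computation above is shorter and avoids importing that fact.)
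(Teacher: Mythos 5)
Your proof is correct, and it takes a genuinely different route from the paper's. The paper proceeds in two stages: it first proves the convergence $p_{\alpha,\beta}(D_{x_n}g-D_{x_*}g)\to 0$ for compactly supported $g\in\Schwartzc$, using uniform continuity of $v\mapsto v^{\alpha}g^{(\beta)}(v)$ together with a careful localization to the dilated support, and then passes to general $g\in\Schwartz$ via the density theorem for $\Schwartzc$ and the scaling identity $p_{\alpha,\beta}(D_xg-D_xh)=x^{\alpha-\beta}p_{\alpha,\beta}(g-h)$. You instead give a one-shot argument: applying the fundamental theorem of calculus in the dilation parameter to $t\mapsto t^{-\beta}g^{(\beta)}(ut^{-1})$ and absorbing the weight through $u^{\alpha}g^{(\beta)}(ut^{-1})=t^{\alpha}(ut^{-1})^{\alpha}g^{(\beta)}(ut^{-1})$, you obtain the uniform-in-$u$ bound
$$p_{\alpha,\beta}(D_xg-D_{x_0}g)\leqslant\left|\int_{x_0}^{x}t^{\alpha-\beta-1}\,dt\right|\bigl(\beta\,p_{\alpha,\beta}(g)+p_{\alpha+1,\beta+1}(g)\bigr),$$
which tends to $0$ as $x\to x_0$ since the interval of integration stays in a compact subset of $\reals_+$. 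Your approach buys self-containedness (no appeal to the density of $\Schwartzc$ or to uniform continuity on compacts) and a quantitative, locally Lipschitz-type modulus of continuity controlled by just two seminorms of $g$; the paper's approach is less sharp but illustrates the approximate-by-compactly-supported-functions technique that its density theorem was built for. Both are valid proofs of the lemma.
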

\begin{proof}
To prove that the function in question is continuous it suffices to prove that for every sequence $(x_n)\subset \reals_+$ convergent to $x_*\in\reals_+$ and fixed $\alpha\in\integers,\ \beta \in\naturals_0$ we have 
\begin{gather}
\lim_{n\rightarrow \infty}\ p_{\alpha,\beta}(D_{x_n}g - D_{x_*}g) = 0.
\label{limdoesDxngDxstargconverge}
\end{gather}

\noindent
Before we prove this equality in full generality, let us focus on a simpler case, when $g\in \Schwartzc.$

We have 
\begin{equation}
\begin{split}
p_{\alpha,\beta}(D_{x_n}g - D_{x_*}g) &= \sup_{u\in\reals_+}\ \left|u^{\alpha}\left(g\left(ux_n^{-1}\right) - g\left(ux_*^{-1}\right)\right)^{(\beta)}\right| \\
&= \sup_{u\in\reals_+}\ \left|u^{\alpha} \left(x_n^{-\beta} g^{(\beta)}\left(ux_n^{-1}\right) - x_*^{-\beta}g^{(\beta)}\left(ux_*^{-1}\right)\right)\right| \\
&\stackrel{u = vx_*}{=} x_*^{\alpha-\beta} \sup_{v\in\reals_+}\ \left|v^{\alpha} \left(\left(x_*x_n^{-1}\right)^{\beta} g^{(\beta)}\left(vx_*x_n^{-1}\right) - g^{(\beta)}(v)\right)\right|
\end{split}
\label{doesDxngDxstargconverge}
\end{equation}

\noindent
Fix $\eps>0$ and observe that by uniform continuity (Lemma 1.3.6 in \cite{DeitmarEchterhoff}, p. 11) of the function $v\mapsto v^{\alpha}g^{(\beta)}(v)$ there exists $\delta>0$ such that 
\begin{gather}
\forall_{\substack{v\in\reals_+\\ y\in[1-\delta,1+\delta]}}\ \left|v^{\alpha} \left(y^{\alpha} g^{(\beta)}(vy) - g^{(\beta)}(v)\right)\right| \leqslant \frac{\eps}{2x_*^{\alpha-\beta}}.
\label{unicont}
\end{gather}

\noindent
We may decrease $\delta$ (if necessary) so that we have
\begin{gather}
\forall_{y\in[1-\delta,1+\delta]}\ |y^{\beta-\alpha} - 1| \leqslant \frac{\eps}{2p_{\alpha,\beta}(g)x_*^{\alpha-\beta}}.
\label{definitionofrho}
\end{gather}

\noindent
Next, we observe the implication
$$v\not\in \supp\left(g^{(\beta)}\right) y^{-1} \ \Longrightarrow \ g^{(\beta)}(vy) = 0,$$

\noindent
which implies that for $v \not \in K_{\delta} := \supp\left(g^{(\beta)}\right) \cdot \left[\frac{1}{1+\delta}, \frac{1}{1-\delta}\right]$ we have
$$\forall_{y\in[1-\delta,1+\delta]}\ g^{(\beta)}(vy) = 0.$$

\noindent
This means that for $y\in[1-\delta,1+\delta]$ we have
\begin{equation*}
\begin{split}
\sup_{v\in\reals_+}\ \left|v^{\alpha}\left(y^{\beta}g^{(\beta)}(vy) - g^{(\beta)}(v)\right)\right| &= \sup_{v\in K_{\rho}}\ \left|v^{\alpha}\left(y^{\beta}g^{(\beta)}(vy) - g^{(\beta)}(v)\right)\right| \\
&\leqslant \sup_{v\in K_{\rho}}\ \left|v^{\alpha} g^{(\beta)}(vy) \left(y^{\beta} - y^{\alpha}\right)\right| + \sup_{v\in K_{\rho}}\ \left|v^{\alpha}\left(y^{\alpha}g^{(\beta)}(vy) - g^{(\beta)}(v)\right)\right| \\
&\stackrel{\eqref{unicont}}{\leqslant} p_{\alpha,\beta}(g) |y^{\beta-\alpha} - 1| + \frac{\eps}{2x_*^{\alpha-\beta}} \stackrel{\eqref{definitionofrho}}{\leqslant} \frac{\eps}{x_*^{\alpha-\beta}}.
\end{split}
\end{equation*}

\noindent
Comparison between the above estimate and \eqref{doesDxngDxstargconverge} reveals that \eqref{limdoesDxngDxstargconverge} is indeed true for $g\in \Schwartzc.$ 

It remains to prove \eqref{limdoesDxngDxstargconverge} for an arbitrary function in $\Schwartz.$ Let $g \in \Schwartz$ and $\eps>0.$ By Theorem \ref{densesubsetinSchwartz} there exists $h\in\Schwartzc$ such that $d_{\Schwartz}(g,h) < \eps.$ Observe that 
\begin{equation*}
\begin{split}
\forall_{n\in\naturals}\ p_{\alpha,\beta}(D_{x_n}g - D_{x_n}h) &= \sup_{y\in\reals_+}\ \left|y^{\alpha}\left(g(yx_n^{-1}) - h(yx_n^{-1})\right)^{(\beta)}\right| \\
&= x_n^{-\beta} \sup_{y\in\reals_+}\ \left|y^{\alpha}\left(g^{(\beta)}(yx_n^{-1}) - h^{(\beta)}(yx_n^{-1})\right)\right| \\
&\stackrel{y = zx_n}{=} x_n^{\alpha-\beta} \sup_{z\in\reals_+}\ \left|z^{\alpha}\left(g^{(\beta)}(z) - h^{(\beta)}(z)\right)\right| = x_n^{\alpha-\beta} p_{\alpha,\beta}(g-h) 
\end{split}
\end{equation*}

\noindent
and, analogously, $p_{\alpha,\beta}(D_{x_*}g - D_{x_*}h) = x_*^{\alpha-\beta} p_{\alpha,\beta}(g-h).$ Finally, we have 
\begin{equation*}
\begin{split}
\forall_{n\in\naturals}\ p_{\alpha,\beta}(D_{x_n}g - D_{x_*}g) &\leqslant p_{\alpha,\beta}(D_{x_n}g - D_{x_n}h) + p_{\alpha,\beta}(D_{x_n}h - D_{x_*}h) + p_{\alpha,\beta}(D_{x_*}h - D_{x_*}g) \\
&\leqslant x_n^{\alpha-\beta}p_{\alpha,\beta}(g-h) + p_{\alpha,\beta}(D_{x_n}h - D_{x_*}h) + x_*^{\alpha-\beta}p_{\alpha,\beta}(g-h),
\end{split}
\end{equation*}

\noindent
so passing to the limit as $n\longrightarrow \infty,$ we obtain
$$\lim_{n\rightarrow\infty}\ p_{\alpha,\beta}(D_{x_n}g - D_{x_*}g) \leqslant 2x_*^{\alpha-\beta}p_{\alpha,\beta}(g-h) + \lim_{n\rightarrow\infty}\ p_{\alpha,\beta}(D_{x_n}h - D_{x_*}h) = 2x_*^{\alpha-\beta}p_{\alpha,\beta}(g-h) < 2x_*^{\alpha-\beta}\eps.$$

\noindent
Since $\eps$ was chosen arbitrarily, we conclude the proof.
\end{proof}

We are now ready to prove that there exists a bijective correspondence between $\Delta(\Schwartz,\star)$ and $\reals$. 

\begin{thm}
If $m_s : \Schwartz \longrightarrow \reals$ is given by
\begin{gather}
m_s(f) := \int_0^{\infty}\ x^{s-1}f(x)\ dx
\label{formulaformphi}
\end{gather}

\noindent
for some $s\in\reals,$ then $m_s \in \Delta(\Schwartz,\star).$ Furthermore, for every $m \in \Delta(\Schwartz,\star)$ there exists a unique $s \in \reals$ such that $m = m_s.$
\label{mellinbijection}
\end{thm}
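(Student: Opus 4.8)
The plan is to establish the two halves of the statement separately. \emph{First}, fix $s\in\reals$ and check that $m_s\in\Delta(\Schwartz,\star)$. Linearity is immediate. For finiteness and continuity, choose integers $\alpha'\leqslant s-1\leqslant\alpha''$; then $x^{s-1}\leqslant x^{\alpha'}+x^{\alpha''}$ for all $x\in\reals_+$, so that $|m_s(f)|\leqslant\int_0^{\infty}\left(x^{\alpha'}+x^{\alpha''}\right)|f(x)|\,dx$, and Lemma \ref{Schwartzfunctionintegrable} bounds the right-hand side by a finite combination of seminorms of $f$ (in particular it is finite); hence $m_s$ is continuous. It is nonzero, since for any nonnegative, not identically zero $f\in\Schwartzc$ -- for instance $f=\theta_2$ from Lemma \ref{sequencethetan} -- one has $m_s(f)=\int_0^{\infty}x^{s-1}f(x)\,dx>0$. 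Multiplicativity is the Fubini computation
\begin{equation*}
m_s(f\star g)=\int_0^{\infty}\int_0^{\infty}x^{s-1}y^{-1}f\left(xy^{-1}\right)g(y)\,dx\,dy\stackrel{x=yt}{=}\int_0^{\infty}y^{s-1}g(y)\left(\int_0^{\infty}t^{s-1}f(t)\,dt\right)dy=m_s(f)\,m_s(g),
\end{equation*}
the interchange of integrals being legitimate because the double integral converges absolutely, again by Lemma \ref{Schwartzfunctionintegrable}.

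\emph{Second}, take an arbitrary $m\in\Delta(\Schwartz,\star)$ and fix $g_0\in\Schwartz$ with $m(g_0)\neq 0$. Set $\chi(x):=m(D_xg_0)/m(g_0)$ for $x\in\reals_+$. Applying $m$ to identity \eqref{DxgstarDyg} of Lemma \ref{powerproperty} and using multiplicativity gives $m(D_yg_0)\,m(D_xg_0)=m(g_0)\,m(D_{xy}g_0)$, i.e. $\chi(x)\chi(y)=\chi(xy)$; moreover $\chi(1)=1$ (since $D_1g_0=g_0$), and $\chi$ is continuous because $x\mapsto D_xg_0$ is continuous by Lemma \ref{continuityofDxg} and $m$ is continuous. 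Since $\chi(x)=\chi(\sqrt{x})^2\geqslant 0$ and $\chi\not\equiv 0$, we must have $\chi>0$ everywhere, so $t\mapsto\ln\chi(e^t)$ is a continuous additive function on $\reals$, hence linear; writing its slope as $s-1$ yields $\chi(x)=x^{s-1}$, that is, $m(D_xg_0)=m(g_0)\,x^{s-1}$ for every $x\in\reals_+$.

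It remains to pass from the dilates of $g_0$ to all of $\Schwartz$. By the definitions of $\star$ and of $D_y$, for every $f\in\Schwartz$ we have the $\Schwartz$-valued integral representation
\begin{equation*}
g_0\star f=\int_0^{\infty}f(y)\,y^{-1}\,D_yg_0\,dy,
\end{equation*}
in which $y\mapsto f(y)y^{-1}D_yg_0$ is continuous by Lemma \ref{continuityofDxg} and the integral converges absolutely in every seminorm, because $p_{\alpha,\beta}(D_yg_0)=y^{\alpha-\beta}p_{\alpha,\beta}(g_0)$ and $y\mapsto y^{\alpha-\beta-1}|f(y)|$ is integrable by Lemma \ref{Schwartzfunctionintegrable}; thus the integral indeed defines an element of the Fr\'echet space $\Schwartz$ and agrees with $g_0\star f$ (evaluate at each point $x\in\reals_+$, where point evaluation is continuous). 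A continuous linear functional commutes with such an integral, so
\begin{equation*}
m(g_0)\,m(f)=m(g_0\star f)=\int_0^{\infty}f(y)\,y^{-1}\,m(D_yg_0)\,dy=m(g_0)\int_0^{\infty}f(y)\,y^{s-1}\,dy=m(g_0)\,m_s(f),
\end{equation*}
and dividing through by $m(g_0)\neq 0$ gives $m=m_s$. Uniqueness: if $m_{s_1}=m_{s_2}$, then applying both functionals to $D_x\theta_2$ and using $m_s(D_x\theta_2)=x^s m_s(\theta_2)$ together with $m_{s_1}(\theta_2)=m_{s_2}(\theta_2)>0$ forces $x^{s_1}=x^{s_2}$ for all $x\in\reals_+$, whence $s_1=s_2$. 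I expect the main obstacle to be this last passage: one must justify that $g_0\star f$ is the genuine $\Schwartz$-valued (Bochner-type) integral displayed above -- not merely a pointwise identity -- so that the continuous functional $m$ may legitimately be moved inside; this rests precisely on the scaling identity $p_{\alpha,\beta}(D_yg_0)=y^{\alpha-\beta}p_{\alpha,\beta}(g_0)$ and the integrability supplied by Lemma \ref{Schwartzfunctionintegrable}.
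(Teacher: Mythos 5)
Your proof is correct and, for the existence part, follows essentially the same route as the paper: the same Fubini computation for multiplicativity, the same use of Lemma \ref{powerproperty} and Lemma \ref{continuityofDxg} to produce a continuous solution of Cauchy's multiplicative equation, and the same representation of $g_0\star f$ as an $\Schwartz$-valued integral in order to pull $m$ inside. You diverge in two places, both defensibly and arguably to your advantage. First, the paper justifies moving $m$ under the integral by citing a vector-integration result (Aliprantis--Border, resp.\ Dinculeanu), whereas you verify the Bochner-type integral directly from the scaling identity $p_{\alpha,\beta}(D_yg_0)=y^{\alpha-\beta}p_{\alpha,\beta}(g_0)$ together with Lemma \ref{Schwartzfunctionintegrable}, and you note that continuity of point evaluations identifies the vector-valued integral with the pointwise formula; this is precisely the step the paper leaves to a citation, and your self-contained check is sound. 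Second, for uniqueness the paper runs a du Bois-Reymond-type argument (a Fabius-function bump showing the representing kernel $\phi$ is unique), while you simply evaluate $m_{s_1}=m_{s_2}$ on the dilates $D_x\theta_2$ and use $m_s(D_xf)=x^s\,m_s(f)$; your argument is shorter and equally rigorous. You also record two small points the paper glosses over: that $m_s\neq 0$ (needed for membership in $\Delta(\Schwartz,\star)$) and that $g_0$ must be chosen with $m(g_0)\neq 0$ rather than merely $g_0\neq 0$. One bookkeeping slip, not a gap: having set the slope equal to $s-1$, so $\chi(x)=x^{s-1}$, your final display uses $y^{-1}m(D_yg_0)=m(g_0)\,y^{s-1}$, which corresponds to $\chi(y)=y^{s}$; with your normalization the computation actually yields $m=m_{s-1}$. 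Since $s$ ranges over all of $\reals$ this costs nothing --- just take the slope to be $s$ (the paper's choice, $\phi(y)=y^s$) so that the exponents match.
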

\begin{proof}
Obviously, if $m_s$ is given by formula (\ref{formulaformphi}) then it is a linear functional. To see that it is continuous we choose a sequence $(f_n)\subset \Schwartz,$ which is convergent to $f\in\Schwartz$ and prove that 
$$\lim_{n\rightarrow\infty}\ m_s(f_n) = m_s(f)$$ 

\noindent
The convergence of the sequence $(f_n)$ means in particular that for every $\eps>0$ there exists $N_{\eps} \in\naturals$ such that for every $n\geqslant N_{\eps}$ we have 

\begin{gather}
\sup_{x\in (0,1)}\ x^{\lfloor s-1 \rfloor} |f_n(x) - f(x)| \leqslant \eps
\label{floorinequality}
\end{gather}

\noindent
and
\begin{gather}
\sup_{x\in [1,\infty)}\ x^{\lceil s-1 \rceil + 2} |f_n(x) - f(x)| \leqslant \eps.
\label{ceilinginequality}
\end{gather}

\noindent
Consequently, for every $n\geqslant N_{\eps}$ we have 
$$\int_0^1\ x^{s-1} |f_n(x) - f(x)|\ dx \leqslant \eps\cdot \int_0^1\ \frac{x^{s-1}}{x^{\lfloor s-1\rfloor}}\ dx \leqslant \eps$$

\noindent
and
$$\int_1^{\infty}\ x^{s-1} |f_n(x) - f(x)|\ dx \leqslant \eps\cdot \int_1^{\infty}\ \frac{x^{s-1}}{x^{\lceil s-1\rceil}}\cdot x^{-2} dx \leqslant \eps.$$

\noindent
We conclude that 
$$\lim_{n\rightarrow \infty}\ \int_0^{\infty}\ x^{s-1} |f_n(x) - f(x)|\ dx = 0$$

\noindent
and thus every functional $m_s$ is continuous.

Next, in order to show that $m_s \in \Delta(\Schwartz,\star)$ we need to demonstrate that $m_s$ is muliplicative. For every $f,g\in\Schwartz$ we have 
\begin{equation*}
\begin{split}
m_s(f\star g) &= \int_0^{\infty}\ x^{s-1} f\star g(x)\ dx \\
&= \int_0^{\infty} x^{s-1} \bigg(\int_0^{\infty}\ y^{-1}f\left(xy^{-1}\right) g(y)\ dy\bigg)\ dx \\
&= \int_0^{\infty} \left(\int_0^{\infty}\ x^{s-1} f\left(xy^{-1}\right)\ dx\right) y^{-1}g(y)\ dy \\
&\stackrel{x = zy}{=} \left(\int_0^{\infty}\ z^{s-1} f(z)\ dz\right)\cdot \left(\int_0^{\infty}\ y^{s-1} g(y)\ dy\right) = m_s(f)m_s(g),
\end{split}
\end{equation*}

\noindent
where we have used Fubini's theorem (see Theorem B.3.1(b) in \cite{DeitmarEchterhoff}, p. 287 or Theorem 8.8 in \cite{Rudin}, p. 164). This concludes the first part of the proof.

For the second part of the proof, we fix an element $m\in \Delta(\Schwartz,\star)$ and try to show that there exists a unique $s \in\reals_+$ such that $m = m_s.$ We fix a nonzero function $g_*\in\Schwartz$ and note that by Lemma \ref{powerproperty} we have
$$\forall_{x,y\in \reals_+}\ D_yg_* \star D_xg_* = g_*\star D_{xy}g_*.$$

\noindent
Applying the functional $m$ to this equation and using its multiplicativity we obtain
\begin{gather}
\forall_{x,y\in \reals_+}\ m(D_yg_*) m(D_xg_*) = m(g_*) m(D_{xy}g_*).
\label{funceqnforpowers}
\end{gather}

\noindent
Next, we define the function $\phi : \reals_+ \longrightarrow \reals$ by the formula
\begin{gather}
\phi(x) := \frac{m(D_xg_*)}{m(g_*)},
\label{definitionofphi}
\end{gather}

\noindent
which is nonzero (because $\phi(1) = 1$) and continuous (by Lemma \ref{continuityofDxg}). Dividing equation (\ref{funceqnforpowers}) by $m(g_*)^2$ we may rewrite it in the form 
$$\forall_{x,y\in \reals_+}\ \phi(y) \phi(x) = \phi(xy),$$

\noindent
which is known as Cauchy's multiplicative functional equation. It follows (see Theorem 3 in \cite{Aczel}, p. 41) that $\phi(y) = y^s$ for some $s\in\reals.$ We have
\begin{equation*}
\begin{split}
\forall_{f\in\Schwartz}\ \int_0^{\infty}\ y^{s-1}f(y)\ dy &=\int_0^{\infty}\ y^{-1}f(y)\phi(y)\ dy \\
&\stackrel{\eqref{definitionofphi}}{=} \int_0^{\infty}\ y^{-1}f(y)\cdot \frac{m(D_yg_*)}{m(g_*)}\ dy\\
&= \frac{1}{m(g_*)}\cdot m\left(\int_0^{\infty}\ y^{-1}f(y)D_yg_*\ dy\right) \\
&= \frac{1}{m(g_*)}\cdot m(g_*\star f) = \frac{1}{m(g_*)}\cdot m(g_*) m(f) = m(f),
\end{split}
\end{equation*}

\noindent
where the third equality holds true due to Lemma 11.45 in \cite{AliprantisBorder}, p. 427 (or Proposition 7 in \cite{Dinculeanu}, p. 123). We have thus proved that $m = m_s.$ 

Finally, the fact that \eqref{definitionofphi} is a unique $\phi$ such that $m=m_{\phi}$ follows from a technique, which is well-known in the field of variational calculus: suppose that there exist two nonzero, continuous functions $\phi_1,\phi_2$ such that $m = m_{\phi_1} = m_{\phi_2}.$ Consequently, we have 
\begin{gather}
\forall_{f\in \Schwartz}\ \int_0^{\infty}\ f(x)\bigg(\phi_1(x) - \phi_2(x)\bigg)\ dx = 0.
\label{duBoiszero}
\end{gather}

\noindent
Assuming the existence of an element $x_*\in \reals_+$ such that $\phi_1(x_*)\neq \phi_2(x_*)$ there exists $\eps >0$ such that $\phi_1 - \phi_2$ is of (without loss of generality) positive sign on $[x_*-\eps, x_*+\eps].$ Let $f_* \in \Schwartzc$ be a function defined by the formula
\begin{gather*}
f_*(x) := \left\{\begin{array}{cc}
(\phi_1(x) - \phi_2(x))\cdot \theta\left(\frac{2}{\eps} (x-x_*+\eps)\right) & x\in \left[x_* - \eps, x_* - \frac{\eps}{2}\right]\\
\phi_1(x) - \phi_2(x) & x\in \left[x_* - \frac{\eps}{2}, x_* + \frac{\eps}{2}\right]\\
(\phi_1(x) - \phi_2(x))\cdot \theta\left(2 - \frac{2}{\eps} (x-x_*)\right) & x\in \left[x_* + \frac{\eps}{2}, x_* + \eps\right]\\
0 & x \not\in [x_*-\eps,x_*+\eps]
\end{array}\right.
\end{gather*}

\noindent
where $\theta$ is the Fabius function. It remains to observe that 
$$0 \stackrel{\eqref{duBoiszero}}{=} \int_0^{\infty}\ f_*(x)\bigg(\phi_1(x) - \phi_2(x)\bigg)\ dx \geqslant \int_{x_*-\frac{\eps}{2}}^{x_*+\frac{\eps}{2}}\ \bigg(\phi_1(x) - \phi_2(x)\bigg)^2\ dx > 0,$$

\noindent
which is a contradiction. This means that $\phi_1 = \phi_2$ and concludes the proof.
\end{proof}

In summary, Theorem \ref{mellinbijection} establishes that the function $\omega: \Delta(\Schwartz,\star) \longrightarrow\reals$ given by $\omega(m) := s$ (where $s\in\reals$ is a unique element such that $m=m_s$) is a bijection. Our next task is to prove that $\omega$ is in fact more than just a bijection of two sets. We intend to show that $\omega$ is a continuous map and ultimately a homeomorphism. We commence with the following result:

\begin{lemma}
For every constant $c\in\reals$ the function $E_c:\reals \longrightarrow \reals,$ defined by
$$E_c(s) := \int_1^3\ (x^s - x^c) \exp\left(-\frac{1}{(3-x)(x-1)}\right)\ dx,$$

\noindent
is increasing.
\label{Ecisincreasing}
\end{lemma}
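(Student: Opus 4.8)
The plan is to differentiate $E_c$ with respect to $s$ and show that the derivative is everywhere positive. Introduce the weight $w(x) := \exp\left(-\frac{1}{(3-x)(x-1)}\right)$, which extends to a continuous function on $[1,3]$ that vanishes at the endpoints, is nonnegative everywhere and is strictly positive on the open interval $(1,3)$. Since $\int_1^3 x^c w(x)\ dx$ does not depend on $s$, we have $E_c(s) = \int_1^3 x^s w(x)\ dx - \textup{const}$, so it suffices to analyse the first term.

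First I would justify differentiation under the integral sign. The map $(s,x)\mapsto x^s w(x)$ is continuous on $\reals\times[1,3]$, and its partial derivative with respect to $s$, namely $(s,x)\mapsto x^s\ln(x)\,w(x)$, is continuous as well. Moreover, for $s$ ranging over an arbitrary compact interval $[a,b]$ and $x\in[1,3]$ one has $|x^s\ln(x)\,w(x)| \leqslant 3^{\max\{|a|,|b|\}}\ln(3)\sup_{x\in[1,3]}w(x)$, and the right-hand side is a constant, hence integrable over $[1,3]$. By the standard Leibniz rule for differentiating an integral depending on a parameter, $E_c$ is differentiable on $\reals$ with
$$E_c'(s) = \int_1^3\ x^s\ln(x)\,w(x)\ dx.$$

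Then I would simply observe that the integrand is strictly positive on $(1,3)$: indeed $x^s>0$, $\ln(x)>0$ because $x>1$, and $w(x)>0$. As the integrand is continuous and strictly positive on a set of positive Lebesgue measure, we conclude $E_c'(s)>0$ for every $s\in\reals$, and therefore $E_c$ is (strictly) increasing. The only mildly technical point in the argument is the domination needed for the Leibniz rule; once that is in place, positivity of the derivative is immediate from $x>1$ on the integration interval.
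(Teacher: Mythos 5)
Your proposal is correct and follows essentially the same route as the paper: differentiate under the integral sign (the paper cites a standard theorem from Lang where you verify the domination hypothesis by hand) and conclude positivity of $E_c'(s)$ from $\ln(x)>0$ on $(1,3)$. No gaps.
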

\begin{proof}
By Theorem 3.2 in \cite{Lang}, p. 337-339 we have 
\begin{equation*}
\begin{split}
\frac{\partial}{\partial s}E_c(s) &= \frac{\partial}{\partial s} \int_1^3\ (x^s - x^c) \exp\left(-\frac{1}{(3-x)(x-1)}\right)\ dx \\
&= \int_1^3\ x^s \ln(x) \exp\left(-\frac{1}{(3-x)(x-1)}\right)\ dx > 0,
\end{split}
\end{equation*}

\noindent
which implies that $E_c$ is increasing and concludes the proof.
\end{proof}

We are ready to prove continuity of $\omega:$

\begin{thm}
Function $\omega : (\Delta(\Schwartz, \star),\tau^*) \longrightarrow \reals$ is continuous.
\label{omegaiscontinuous}
\end{thm}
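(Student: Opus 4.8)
The plan is to show that $\omega$ is sequentially continuous, which suffices since the domain is metrizable (it sits inside $\reals$ via $\omega$, and we want to verify the topology matches — but more carefully, we work directly with the weak* topology and nets, or note that it is enough to check sequences because $\reals$ is metrizable and we only need continuity into $\reals$). Concretely, I would take a sequence $(m_n) \subset \Delta(\Schwartz,\star)$ converging in the weak* topology $\tau^*$ to some $m \in \Delta(\Schwartz,\star)$, write $s_n := \omega(m_n)$ and $s := \omega(m)$, and aim to prove $s_n \to s$ in $\reals$. Weak* convergence means $m_n(f) \to m(f)$ for every fixed $f \in \Schwartz$; by Theorem \ref{mellinbijection} this reads $\int_0^\infty x^{s_n - 1} f(x)\, dx \to \int_0^\infty x^{s-1} f(x)\, dx$ for all $f \in \Schwartz$.

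The key step is to pick one well-chosen test function that lets us extract convergence of the exponents. The natural candidate, in view of Lemma \ref{Ecisincreasing}, is the bump function $f_*(x) := \exp\left(-\frac{1}{(3-x)(x-1)}\right)$ on $(1,3)$ extended by $0$ elsewhere, which lies in $\Schwartzc \subset \Schwartz$ (it is a smooth compactly supported function bounded away from $0$ and $\infty$). Then $m_n(f_*) = \int_1^3 x^{s_n-1} f_*(x)\,dx$ and $m(f_*) = \int_1^3 x^{s-1} f_*(x)\,dx$, so $m_n(f_*) \to m(f_*)$. Writing $g(t) := \int_1^3 x^{t-1} f_*(x)\, dx$, observe that $g(t) = E_{s-1}(t) + g(s)$ for the function $E_c$ of Lemma \ref{Ecisincreasing} with $c = s-1$, hence $g$ is strictly increasing (and plainly continuous, again by differentiation under the integral as in Lemma \ref{Ecisincreasing}). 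Therefore $g$ is a continuous strictly increasing — hence injective with continuous inverse on its image — bijection onto its range, and from $g(s_n) \to g(s)$ we may conclude $s_n \to s$.

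The one genuine obstacle is ruling out that the sequence $(s_n)$ escapes to $\pm\infty$ (or otherwise fails to be controlled), since $g^{-1}$ is continuous only on the open interval $g(\reals)$ and a priori $g(s_n)$ could approach an endpoint of that interval. To handle this I would argue that $g(s_n) \to g(s)$ forces $(s_n)$ to be bounded: if $s_n \to +\infty$ along a subsequence then $g(s_n) \to \sup g = \lim_{t\to\infty} \int_1^3 x^{t-1} f_*(x)\,dx = +\infty$ (the integrand blows up near $x=3$), contradicting convergence to the finite value $g(s)$; similarly $s_n \to -\infty$ forces $g(s_n) \to \inf g = 0 < g(s)$ if $s > 1$, or one uses that $g(t) \to 0$ as $t \to -\infty$ since $x^{t-1} \to 0$ for $x \in (1,3]$ by dominated convergence, again contradicting $g(s_n)\to g(s)$ (note $g(s) > 0$ always, as $f_* > 0$ on $(1,3)$). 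Once $(s_n)$ is bounded, every subsequence has a further subsequence converging to some $s_*$, and continuity of $g$ gives $g(s_*) = g(s)$, so $s_* = s$ by injectivity; hence the whole sequence converges to $s$. This establishes sequential continuity of $\omega$, and since $\Delta(\Schwartz,\star)$ carries a topology for which we only need continuity of a map into the metric space $\reals$, combined with the fact that $\Schwartz$ is separable (being metrizable and, one checks, separable) so that weak* topology on the equicontinuous — actually here simply bijective — image is metrizable, sequential continuity yields continuity; alternatively one repeats the identical argument with nets in place of sequences, which goes through verbatim.
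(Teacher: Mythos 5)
Your core mechanism is the same as the paper's: test against the single bump $f_*(x)=\exp\left(-\frac{1}{(3-x)(x-1)}\right)$ supported on $[1,3]$, observe that $s\mapsto\int_1^3 x^{s-1}f_*(x)\,dx$ is continuous and strictly increasing (this is exactly Lemma \ref{Ecisincreasing}, up to a harmless shift of the argument), and invert this monotone map to convert closeness of the values $m(f_*)$ into closeness of the exponents. The genuine gap lies in the topological packaging: you prove sequential continuity, and the upgrade to continuity on $(\Delta(\Schwartz,\star),\tau^*)$ is not justified. The weak* topology on $\Delta(\Schwartz,\star)$ is not known, at this stage of the development, to be metrizable: separability arguments give weak* metrizability only on equicontinuous (bounded) subsets of the dual, and $\Delta(\Schwartz,\star)$ is not such a subset, since the functionals $m_s$ for unbounded $s$ need control by seminorms $p_{\alpha,\beta}$ with $\alpha$ depending on $s$; inferring metrizability from $\Delta(\Schwartz,\star)\cong\reals$ would be circular, because that homeomorphism is precisely what Theorems \ref{omegaiscontinuous} and \ref{omegaisopen} are meant to establish. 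Your fallback ``repeat the argument with nets, verbatim'' also does not go through verbatim, because your boundedness step extracts convergent subsequences via Bolzano--Weierstrass, a sequence-specific device; with nets one would need subnets converging in the compact extended line, or a different argument altogether.

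The repair is short, and it is essentially what the paper does: avoid sequences and nets entirely. Given $m_{**}\in\omega^{-1}((s_*-\eps,s_*+\eps))$ with $s_{**}=\omega(m_{**})$, continuity and strict monotonicity of $E_{s_{**}}$ (Lemma \ref{Ecisincreasing}) together with $E_{s_{**}}(s_{**})=0$ produce $\rho>0$ such that $|E_{s_{**}}(s)|<\rho$ forces $|s-s_{**}|$ small; then the basic weak* neighbourhood $\{m\in\Delta(\Schwartz,\star)\,:\,|m(f_*)-m_{**}(f_*)|<\rho\}$ lies inside $\omega^{-1}((s_*-\eps,s_*+\eps))$, because $m(f_*)-m_{**}(f_*)$ is exactly the increasing function of $\omega(m)$ evaluated against $f_*$. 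Alternatively, if you prefer your convergence formulation, replace the subsequence extraction by the observation that a continuous strictly increasing $g:\reals\longrightarrow\reals$ is a homeomorphism onto the open interval $\left(\lim_{t\rightarrow-\infty}g(t),\lim_{t\rightarrow+\infty}g(t)\right)$, so $g^{-1}$ is continuous at $g(s)$ and convergence $m_{\lambda}(f_*)\rightarrow m(f_*)$ along an arbitrary net immediately yields $\omega(m_{\lambda})\rightarrow\omega(m)$; with either fix your argument is complete and coincides in substance with the paper's proof.
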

\begin{proof}
We fix $s_*\in\reals$ as well as its arbitrary open neighbourhood $U_{\eps} := (s_*-\eps,s_*+\eps)$ where $\eps>0.$ Our task is to prove that 
$$\omega^{-1}(U_{\eps}) = \bigg\{m \in \Delta(\Schwartz,\star)\ :\ |\omega(m) - s_*| < \eps\bigg\}$$

\noindent
is weak* open and we do it by fixing an arbitrary element $m_{**}\in \omega^{-1}(U_{\eps})$ and constructing its weak* open neighbourhood $W_{**}$ contained in $\omega^{-1}(U_{\eps}),$ i.e., 
$$m_{**}\in W_{**}\subset \omega^{-1}(U_{\eps}).$$ 

To begin with, since $m_{**}\in \omega^{-1}(U_{\eps})$ then $s_{**}:= \omega(m_{**})$ satisfies $|s_{**} - s_*| < \delta\eps$ for some $\delta \in (0,1).$ Next, by Lemma \ref{Ecisincreasing} we know that the function $E_{s_{**}}$ is increasing and since $E_{s_{**}}(s_{**}) = 0$ there exists $\rho>0$ such that 
\begin{gather}
|E_{s_{**}}(s)| < \rho \ \Longrightarrow \ |s-s_{**}| < (1-\delta)\eps.
\label{crucialpropertyofE}
\end{gather}

\noindent
We pick $f\in\Schwartzc$ defined by
\begin{gather*}
f(x) := \left\{\begin{array}{cc}
\exp\left(-\frac{1}{(3-x)(x-1)}\right) & \text{if }\ x\in[1,3],\\
0 & \text{otherwise,}
\end{array}\right.
\end{gather*}
	
\noindent
and claim that 
$$W_{**} := \bigg\{m \in \Delta(\Schwartz,\star)\ :\ |m(f) - m_{**}(f)| < \rho\bigg\}$$
	
\noindent
is the desired weak* open neighbourhood of $m_{**}.$ Indeed, for every $m\in\Delta(\Schwartz,\star)$ we have 
\begin{equation*}
m(f) - m_{**}(f) = \int_0^{\infty}\ (x^s - x^{s_{**}}) f(x)\ dx \stackrel{\supp(f) = [1,3]}{=} \int_1^3\ (x^s - x^{s_{**}}) \exp\left(-\frac{1}{(3-x)(x-1)}\right)\ dx = E_{s_{**}}(s)
\end{equation*}
	
\noindent
where $s = \omega(m).$ If $m\in W_{**}$ then $|E_{s_{**}}(s)| < \rho,$ so by \eqref{crucialpropertyofE} we have $|s-s_{**}| < (1-\delta)\eps.$ Finally, we have 
$$\forall_{m\in W_{**}}\ |s - s_*| \leqslant |s - s_{**}| + |s_{**} - s_*| < (1-\delta)\eps + \delta \eps = \eps,$$
	
\noindent
which proves that $m_{**}\in W_{**} \subset \omega^{-1}(U_{\eps}).$
\end{proof}

We are on the right track to prove that $\omega$ is a homeomorphism between $\Delta(\Schwartz)$ and $\reals.$ We already know that it is a conitnuous map so it suffices to prove that it is also open. To this end we will apply the following result:

\begin{lemma}
Let $(f_k)_{k=1}^n \subset \Schwartz$ be a finite sequence of functions and let $s_* \in \reals.$ For every $\eps>0$ there exist an open neighbourhood $W_*$ of $s_*$ and constants $a,b \in \reals_+$ such that $a<1<b$ and 
$$\forall_{\substack{s\in W_*\\ k=1,\ldots,n}}\ \int_{\reals_+\backslash [a,b]}\ |f_n(x)| |x^s - x^{s_*}|\ dx < \eps.$$
\label{existenceofWsstar}
\end{lemma}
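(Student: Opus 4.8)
The plan is to choose the neighbourhood once and for all as $W_* := (s_* - 1,\, s_* + 1)$, and then select $a$ and $b$ depending only on $\eps$ and the finite family $(f_k)_{k=1}^n$. The key observation is that on each of the two tails $(0,a]$ and $[b,\infty)$ the factor $|x^s - x^{s_*}|$ can be dominated by a fixed power of $x$ that does not depend on $s \in W_*$: for $x \in (0,1]$ and $s \in W_*$ the map $t \mapsto x^t$ is decreasing, so $x^s \le x^{s_*-1}$ and hence $|x^s - x^{s_*}| \le x^s + x^{s_*} \le 2x^{s_*-1}$; symmetrically, for $x \in [1,\infty)$ and $s \in W_*$ the map $t \mapsto x^t$ is increasing, so $x^s \le x^{s_*+1}$ and hence $|x^s - x^{s_*}| \le 2x^{s_*+1}$.

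With these two pointwise bounds in hand, I would reduce the lemma to showing that $\int_0^a x^{s_*-1}|f_k(x)|\,dx$ and $\int_b^\infty x^{s_*+1}|f_k(x)|\,dx$ can be made arbitrarily small, uniformly in $k = 1,\dots,n$, by taking $a$ small and $b$ large. For the tail at $0$, set $\alpha := \lfloor s_* - 1\rfloor$ and use $p_{\alpha,0}(f_k) < \infty$ to get $\int_0^a x^{s_*-1}|f_k(x)|\,dx \le p_{\alpha,0}(f_k)\int_0^a x^{\,s_*-1-\alpha}\,dx = p_{\alpha,0}(f_k)\,\frac{a^{\,s_*-\alpha}}{s_*-\alpha}$, where $s_* - 1 - \alpha \in [0,1)$ guarantees the integral is finite and tends to $0$ as $a \to 0^+$. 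For the tail at $\infty$, set $\gamma := \lceil s_* + 1 \rceil + 2$, so that $s_* + 1 - \gamma \le -2$, and use $p_{\gamma,0}(f_k) < \infty$ to get $\int_b^\infty x^{s_*+1}|f_k(x)|\,dx \le p_{\gamma,0}(f_k)\int_b^\infty x^{-2}\,dx = p_{\gamma,0}(f_k)/b \to 0$ as $b \to \infty$. This is the same splitting by floor and ceiling that appears in the continuity argument in Theorem \ref{mellinbijection}; alternatively one could quote Lemma \ref{Schwartzfunctionintegrable}.

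Finally, put $C := 1 + 2\max_{1\le k\le n}\bigl(p_{\alpha,0}(f_k) + p_{\gamma,0}(f_k)\bigr)$, choose $a \in (0,1)$ so small that $\frac{a^{\,s_*-\alpha}}{s_*-\alpha} < \frac{\eps}{2C}$ and $b > 1$ so large that $\frac1b < \frac{\eps}{2C}$. Then for every $s \in W_*$ and every $k$,
$$\int_{\reals_+\setminus[a,b]} |f_k(x)|\,|x^s - x^{s_*}|\,dx \;\le\; 2p_{\alpha,0}(f_k)\,\frac{a^{\,s_*-\alpha}}{s_*-\alpha} \;+\; 2p_{\gamma,0}(f_k)\,\frac1b \;<\; \eps,$$
which is exactly the claim. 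I do not expect a genuine obstacle here; the only points needing (minor) care are the uniformity in $s$ — handled by fixing $W_* = (s_*-1,s_*+1)$ at the outset — the non-integer exponent $s_*$ — handled by the floor/ceiling device — and the finiteness of the family, which is precisely what allows a single pair $(a,b)$ to work for all $k$.
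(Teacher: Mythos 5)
Your proof is correct and follows essentially the same strategy as the paper's: fix a neighbourhood of $s_*$ once and for all, dominate $|x^s - x^{s_*}|$ uniformly in $s$ on the two tails by a fixed power of $x$, and then use the Schwartz decay of the $f_k$ (via the seminorms $p_{\alpha,0}$, $p_{\gamma,0}$) to make the tail integrals small by taking $a$ small and $b$ large. The only difference is cosmetic but pleasant: by taking $W_* = (s_*-1, s_*+1)$ and exploiting the monotonicity of $t \mapsto x^t$ together with the floor/ceiling choice of integer exponents, you avoid the paper's three-way case split on the sign of $s_*$.
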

\begin{proof}
We divide the proof with respect to the value $s_*:$

\begin{itemize}
	\item Suppose that $s_* > 0$ and set $W_{s_*} := \left(\frac{1}{2}s_*,\frac{3}{2}s_*\right),$ which implies
	\begin{gather}
	\forall_{\substack{s\in W_{s_*}\\ x\in \reals_+}}\ x^s \leqslant x^{\frac{1}{2}s_*} + x^{\frac{3}{2}s_*}.
	\label{estimateonxs}
	\end{gather}
	
	Furthermore, since $(f_k)_{k=1}^n \subset \Schwartz$ then there exist constants $M_1,M_2 > 0$ such that 
	\begin{gather}
	\forall_{\substack{x\in(0,1)\\ k=1,\ldots,n}}\ |f_k(x)|\cdot \left(x^{\frac{1}{2}s_* - 1} + x^{s_* - 1} + x^{\frac{3}{2}s_* - 1}\right) < M_1
	\label{oszacowaniezM1}
	\end{gather}
	
	\noindent
	and 
	\begin{gather}
	\forall_{\substack{x\in(1,\infty)\\ k=1,\ldots,n}}\ |f_k(x)|\cdot \left(x^{\frac{1}{2}s_* + 2} + + x^{s_* + 2} + x^{\frac{3}{2}s_* + 2}\right) < M_2.
	\label{oszacowaniezM2}
	\end{gather}
	
	Finally, if $a,b$ are chosen such that $a<1<b$ and 
	$$\frac{M_1a^2}{2} + \frac{M_2}{b} < \eps$$
	
	\noindent
	then we have 
	\begin{gather*}
	\forall_{\substack{s\in W_{s_*}\\ k=1,\ldots,n}}\ \int_{\reals_+\backslash [a,b]}\ |f_k(x)| |x^s - x^{s_*}|\ dx \stackrel{\eqref{estimateonxs}}{\leqslant} \int_{\reals_+\backslash [a,b]}\ |f_k(x)|\cdot \left(x^{\frac{1}{2}s_*} + x^{s_*} + x^{\frac{3}{2}s_*}\right)\ dx \\
	\leqslant \int_0^a\ |f_k(x)|\cdot \left(x^{\frac{1}{2}s_*} + x^{s_*} + x^{\frac{3}{2}s_*}\right)\ dx + \int_b^{\infty}\ |f_k(x)|\cdot \left(x^{\frac{1}{2}s_*} + x^{s_*} + x^{\frac{3}{2}s_*}\right)\ dx \\
	\stackrel{\eqref{oszacowaniezM1}, \eqref{oszacowaniezM2}}{\leqslant} \int_0^a\ M_1x\ dx + \int_b^{\infty}\ \frac{M_2}{x^2}\ dx = \frac{M_1a^2}{2} + \frac{M_2}{b} < \eps.
	\end{gather*}
	
	\item If $s_* = 0$ set $W_{s_*} := (-1,1),$ which implies
	\begin{gather}
	\forall_{\substack{s\in W_{s_*}\\ x\in \reals_+}}\ x^s \leqslant x^{-1} + x.
	\label{estimateonxscases0}
	\end{gather}
	
	Furthermore, since $(f_k)_{k=1}^n \subset \Schwartz$ then there exist constants $M_1,M_2 > 0$ such that 
	\begin{gather}
	\forall_{\substack{x\in(0,1)\\ k=1,\ldots,n}}\ |f_k(x)|\cdot \left(x^{-2} + x^{-1} + 1\right) < M_1
	\label{oszacowaniezM1cases0}
	\end{gather}
	
	\noindent
	and 
	\begin{gather}
	\forall_{\substack{x\in(1,\infty)\\ k=1,\ldots,n}}\ |f_k(x)|\cdot \left(x + x^2 + x^3\right) < M_2.
	\label{oszacowaniezM2cases0}
	\end{gather}
	
	Finally, if $a,b$ are chosen such that $a<1<b$ and 
	$$\frac{M_1a^2}{2} + \frac{M_2}{b} < \eps$$
	
	\noindent
	then we have 
	\begin{gather*}
	\forall_{\substack{s\in W_{s_*}\\ k=1,\ldots,n}}\ \int_{\reals_+\backslash [a,b]}\ |f_k(x)| |x^s - 1|\ dx \stackrel{\eqref{estimateonxscases0}}{\leqslant} \int_{\reals_+\backslash [a,b]}\ |f_k(x)|\cdot \left(x^{-1} + 1 + x\right)\ dx \\
	\leqslant \int_0^a\ |f_k(x)|\cdot \left(x^{-1} + 1 + x\right)\ dx + \int_b^{\infty}\ |f_k(x)|\cdot \left(x^{-1} + 1 + x\right)\ dx \\
	\stackrel{\eqref{oszacowaniezM1cases0}, \eqref{oszacowaniezM2cases0}}{\leqslant} \int_0^a\ M_1x\ dx + \int_b^{\infty}\ \frac{M_2}{x^2}\ dx = \frac{M_1a^2}{2} + \frac{M_2}{b} < \eps.
	\end{gather*}
	
	\item The case when $s_* < 0$ is completely analogous to the one where $s_* > 0.$
\end{itemize}
\end{proof}

Finally, we are ready to prove openness of $\omega$:

\begin{thm}
$\omega: (\Delta(\Schwartz,\star), \tau^*)\longrightarrow \reals$ is an open map. 
\label{omegaisopen}
\end{thm}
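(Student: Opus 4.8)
The plan is to show that $\omega$ maps a basic weak* open neighbourhood of an arbitrary point $m_* \in \Delta(\Schwartz,\star)$ onto a set containing an open interval around $s_* := \omega(m_*)$. A basic weak* neighbourhood of $m_*$ has the form
$$
N := \bigl\{ m \in \Delta(\Schwartz,\star)\ :\ |m(f_k) - m_*(f_k)| < \eta\ \text{for}\ k=1,\dots,n \bigr\}
$$
for some finite family $(f_k)_{k=1}^n \subset \Schwartz$ and $\eta>0$. Since $\omega$ is a bijection (Theorem \ref{mellinbijection}), it suffices to produce $\eps>0$ such that every $s$ with $|s-s_*|<\eps$ satisfies $|m_s(f_k) - m_{s_*}(f_k)| < \eta$ for all $k$; then $(s_*-\eps,s_*+\eps) \subseteq \omega(N)$, and since the basic sets $N$ form a neighbourhood base and $\omega$ is a bijection, this gives openness. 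So the whole matter reduces to a single uniform continuity estimate: the map $s \mapsto m_s(f) = \int_0^\infty x^{s-1}f(x)\,dx$ is continuous on $\reals$ for each fixed $f \in \Schwartz$, with the needed uniformity over the finite family.

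The key step is to split the integral $\int_0^\infty |f_k(x)|\,|x^s - x^{s_*}|\,dx$ into a tail part and a compact part. First I would invoke Lemma \ref{existenceofWsstar}: it furnishes an open neighbourhood $W_*$ of $s_*$ and constants $a<1<b$ so that, uniformly for $s \in W_*$ and all $k$,
$$
\int_{\reals_+\backslash[a,b]}\ |f_k(x)|\,|x^s - x^{s_*}|\ dx < \frac{\eta}{2}.
$$
On the remaining compact interval $[a,b]$, the integrand is controlled by $\sup_{x\in[a,b]}|x^s - x^{s_*}| \cdot \int_a^b |f_k(x)|\,dx$; the integrals $\int_a^b|f_k|$ are finite constants (Lemma \ref{Schwartzfunctionintegrable}), and $\sup_{x\in[a,b]}|x^s-x^{s_*}| \to 0$ as $s\to s_*$ by uniform continuity of $(s,x)\mapsto x^s$ on the compact set $W_*'\times[a,b]$ for any relatively compact $W_*' \subseteq W_*$. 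Hence there is $\eps>0$, with $(s_*-\eps,s_*+\eps)\subseteq W_*$, making the compact part $<\eta/2$ for all $k$ simultaneously. Adding the two bounds gives $|m_s(f_k)-m_{s_*}(f_k)| < \eta$, as required.

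Assembling: given $m_* \in \Delta(\Schwartz,\star)$ and a basic weak* neighbourhood $N$ of $m_*$ as above, set $s_* = \omega(m_*)$, apply Lemma \ref{existenceofWsstar} with this family $(f_k)$ and $\eps' = \eta/2$ to get $W_*$ and $a,b$, then choose $\eps>0$ from the compact-interval estimate; every $s\in(s_*-\eps,s_*+\eps)$ yields $m_s \in N$, so $(s_*-\eps,s_*+\eps) \subseteq \omega(N)$. Since $\omega$ is a bijection and images of a neighbourhood base at $m_*$ under a bijection form a neighbourhood base at $\omega(m_*)$ once each contains an open set, $\omega$ is open. The main obstacle I anticipate is purely bookkeeping: making the estimates uniform over the finite family simultaneously and over $s$ in a neighbourhood — which is exactly why Lemma \ref{existenceofWsstar} was isolated beforehand — so after that the argument is a routine two-piece splitting with no conceptual difficulty.
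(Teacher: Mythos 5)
Your proposal is correct and takes essentially the same approach as the paper: the tail of $\int_0^\infty |f_k(x)|\,|x^s-x^{s_*}|\,dx$ is controlled by Lemma \ref{existenceofWsstar} and the compact part by $\sup_{x\in[a,b]}|x^s-x^{s_*}|$ times an $L^1$ bound, yielding an interval around $s_*$ inside the image. The only difference is bookkeeping: the paper keeps the basic set centered at $m_{s_*}$ and treats an arbitrary point $s_{**}$ of its image via a $\delta$-split of $\eps$ and the triangle inequality, whereas you re-center the basic weak* neighbourhood at the arbitrary point $m_*$, which is legitimate (such sets form a neighbourhood base) and avoids that extra step.
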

\begin{proof}
Our task is to prove that the image (under $\omega$) of an arbitrary weak* open set 
\begin{gather*}
U := \bigg\{m\in \Delta(\Schwartz,\star) \ :\ \forall_{k=1,\ldots,n}\ |m(f_k) - m_{s_*}(f_k)| < \eps\bigg\},
\label{weakstaropenset}
\end{gather*}

\noindent
where $\eps>0,\ s_* \in \reals$ and $(f_k)_{k=1}^n\subset \Schwartz$ are fixed, is open in $\reals$. We fix $s_{**}\in \omega(U),$ which means that there exists $\delta\in(0,1)$ such that 
\begin{gather}
\forall_{k=1,\ldots,n}\ |m_{s_{**}}(f_k) - m_{s_*}(f_k)| < \delta\eps.
\label{deltaeps}
\end{gather}

\noindent
Furthermore, by Lemma \ref{existenceofWsstar} we pick $W_{s_{**}}$ and $a,b\in\reals_+$ such that $a<1<b$ and
\begin{gather}
\forall_{\substack{s \in W_{s_{**}}\\ k=1,\ldots,n}}\ \int_{\reals_+ \backslash [a,b]}\ |f_k(x)||x^s - x^{s_{**}}|\ dx \leqslant \frac{(1-\delta)\eps}{4}.
\label{choiceofK}
\end{gather}

\noindent
Last but not least, we put
\begin{gather}
V := \bigg\{s \in \reals\ :\ \sup_{x\in [a,b]}\ \left|x^s - x^{s_{**}}\right| < \frac{(1-\delta)\eps}{2\max_{k=1,\ldots,n}\ \|f_k\|_1}\bigg\} \cap W_{s_{**}},
\label{Vopenintucc}
\end{gather}

\noindent
which is an open neighbourhood of $s_{**}.$ Finally, we calculate that for every $s \in V$ and $k=1,\ldots,n$ we have
\begin{equation*}
\begin{split}
|m_s(f_k) - m_{s_*}(f_k)| &\leqslant |m_s(f_k) - m_{s_{**}}(f_k)| + |m_{s_{**}}(f_k) - m_{s_*}(f_k)|\\
&\stackrel{\eqref{deltaeps}}{\leqslant} |m_s(f_k) - m_{s_{**}}(f_k)| + \delta\eps \\
&\leqslant \int_a^b\ |f_k(x)| \left|x^s - x^{s_{**}}\right|\ dx + \int_{\reals_+\backslash [a,b]}\ |f_k(x)| \left|x^s - x^{s_{**}}\right|\ dx + \delta\eps\\
&\stackrel{\eqref{choiceofK}}{\leqslant} \int_a^b\ |f_k(x)| |x^s - x^{s_{**}}|\ dx + \frac{(1-\delta)\eps}{2} + \delta\eps\\
&\stackrel{\eqref{Vopenintucc}}{<} \frac{(1-\delta)\eps}{2\max_{k=1,\ldots,n}\ \|f_k\|_1} \int_{\reals_+}\ |f_k(x)|\ dx + \frac{(1+\delta)\eps}{2} \leqslant \eps.
\end{split}
\end{equation*}

\noindent
We conclude that $V$ is a open neighbourhood of (an arbitrarily chosen) $s_{**}$ and $V \subset \omega(U).$ Thus $\omega$ is an open map.
\end{proof}

\section{Final remarks on Mellin transform}
\label{section:finalremarks}

Entering the final section of the paper, we aim to prove that the Mellin transform is constructed in a similar fashion to the classical Gelfand transform (see Chapter 2.4 in \cite{DeitmarEchterhoff} or Chapter 2 in \cite{Kaniuth}). The classical Gelfand theory states that if $\algebra$ is a commutative Banach algebra then there exists a (norm-decreasing) algebra homomorphism 
$\gamma : \algebra \longrightarrow C_0(\Delta(\algebra))$ given by the formula
$$\forall_{m\in\Delta(\algebra)}\ \gamma(f)(m) := m(f).$$

\noindent
The map $\gamma$ is referred to as the Gelfand transform. Although (as we showed earlier in Theorem \ref{Schwartzspacenotnormable}) $\Schwartz$ is not a commutative Banach algebra, we can still consider a map $\MM$ which to every $f\in\Schwartz$ assigns a function $\MM_f$ defined on the structure space $\Delta(\Schwartz)\cong \reals$ by the formula
$$\forall_{s\in\reals}\ \MM_f(s) := m_s(f) = \int_0^{\infty}\ x^{s-1}f(x)\ dx.$$ 

\noindent
The map $\MM: f\mapsto \MM_f$ is obviously the Mellin transform, whose codomain is $C(\reals)$ due to the following lemma:

\begin{lemma}
If $f\in\Schwartz$ then $\MM_f\in C(\reals).$
\end{lemma}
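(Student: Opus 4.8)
The plan is to show that $\MM_f(s) = \int_0^\infty x^{s-1} f(x)\, dx$ depends continuously on the parameter $s\in\reals$. The natural tool is a standard continuity-under-the-integral-sign theorem (of the type already invoked in Lemma \ref{Ecisincreasing}, e.g. Theorem 3.2 in \cite{Lang}): it suffices to verify that for each fixed $x\in\reals_+$ the map $s\mapsto x^{s-1}f(x)$ is continuous (obvious, since $s\mapsto x^{s-1}=e^{(s-1)\ln x}$ is continuous), and that on a neighbourhood of any given point $s_*\in\reals$ the integrand is dominated by a single integrable function independent of $s$.

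First I would fix $s_*\in\reals$ and restrict attention to $s$ in a bounded interval $[s_*-1, s_*+1]$. On this interval, for $x\in(0,1]$ we have $x^{s-1}\leqslant x^{s_*-2}$, and for $x\in[1,\infty)$ we have $x^{s-1}\leqslant x^{s_*}$; hence $|x^{s-1}f(x)| \leqslant x^{s_*-2}|f(x)|\mathds{1}_{(0,1]}(x) + x^{s_*}|f(x)|\mathds{1}_{[1,\infty)}(x) =: g(x)$ for all such $s$. Then I would invoke Lemma \ref{Schwartzfunctionintegrable} (with suitable choices of the exponents $\alpha$) to conclude that each of the two pieces — and therefore $g$ — is integrable over $\reals_+$: indeed $\int_0^1 x^{s_*-2}|f(x)|\,dx$ and $\int_1^\infty x^{s_*}|f(x)|\,dx$ are both finite because $f$ and all its ``polynomial-weighted'' versions lie in $L^1(\reals_+)$. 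More precisely, pick an integer $\alpha_0 \leqslant s_*-2$; then $x^{s_*-2}|f(x)| \leqslant x^{\alpha_0}|f(x)|$ on $(0,1]$, and similarly pick an integer $\alpha_1 \geqslant s_*$ for the tail, so Lemma \ref{Schwartzfunctionintegrable} applies directly.

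With the dominating function $g$ in hand, the continuity theorem yields that $\MM_f$ is continuous at $s_*$; since $s_*$ was arbitrary, $\MM_f\in C(\reals)$. The only mildly delicate point — which I would expect to be the ``main obstacle'', though it is really just bookkeeping — is handling the singular behaviour at $x\to 0^+$ and the decay at $x\to\infty$ simultaneously with a single bound valid for the whole parameter window; this is why splitting $\reals_+$ into $(0,1]$ and $[1,\infty)$ and using the two-sided rapid decay built into the definition \eqref{definitionofSchwartz} of $\Schwartz$ is essential. No deep idea is needed beyond this; the argument is a routine application of dominated-type continuity combined with Lemma \ref{Schwartzfunctionintegrable}.
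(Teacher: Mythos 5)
Your proof is correct and takes essentially the same route as the paper: both arguments split $\reals_+$ at $x=1$, dominate $x^{s-1}|f(x)|$ for all $s$ in a neighbourhood of $s_*$ by a single integrable function obtained from the rapid decay of $f$ (the paper uses the exponents $\lfloor s_*-1\rfloor-1$ and $\lceil s_*-1\rceil+1$, you use $s_*-2$ and $s_*$ and then pass to integer exponents via Lemma \ref{Schwartzfunctionintegrable}), and conclude by dominated convergence, i.e.\ continuity under the integral sign.
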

\begin{proof}
In order to see that $\MM_f$ is continuous, let $s_*\in\reals$ and let $(s_n)\subset \reals$ be a sequence convergent to $s_*.$ Consider the function
\begin{gather*}
g(x) := \left\{\begin{array}{cr}
x^{\lfloor s_* - 1\rfloor - 1}|f(x)| & \text{if }\ x\in(0,1],\\
x^{\lceil s_* - 1\rceil + 1}|f(x)| & \text{if }\ x\in[1,\infty),\
\end{array}\right.
\end{gather*}

\noindent
which is integrable over $\reals_+$ (since $f\in\Schwartz$) and satisfies 
$$|x^{s_n-1}f(x)| \leqslant g(x)$$

\noindent
for every $x\in\reals_+$ and almost every $n\in\naturals.$ By dominated convergence theorem it follows that 
$$\lim_{n\rightarrow\infty}\ \MM_f(s_n) = \lim_{n\rightarrow\infty}\ \int_0^{\infty}\ x^{s_n-1}f(x)\ dx = \lim_{n\rightarrow\infty}\ \int_0^{\infty}\ x^{s_*-1}f(x)\ dx = \MM_f(s_*),$$

\noindent
which establishes continuity of $\MM_f.$
\end{proof}

In the classical Gelfand theory, Riemann-Lebesgue lemma (see Lemma 3.3.2 in \cite{Deitmar}, p. 47 or Theorem 1.30 in \cite{Folland}, p. 15) states that the Gelfand transform of an element $f$ (in a commutative Banach algebra) is not only continuous, but also vanishes at infinity, i.e., $\gamma(f)\in C_0$. Unfortunately, the case of the Mellin transform is different. To see this, it suffices to take $\theta_2$ from Lemma \ref{sequencethetan} and observe that 
$$\lim_{s\rightarrow \infty}\ \MM_{\theta_2}(s) = \lim_{s\rightarrow\infty}\ \int_0^{\infty}\ x^{s-1}\theta_2(x)\ dx \geqslant \lim_{s\rightarrow\infty}\ \int_1^{2}\ x^{s-1}\ dx = \lim_{s\rightarrow\infty}\ \frac{2^s - 1}{s} = \infty.$$

Luckily, the Mellin transform is still an algebra homomorphism (just as Gelfand transform) since for every $s \in\reals$ we have
\begin{gather}
\MM_{f\star g}(s) = m_s(f\star g) = m_s(f)m_s(g) = \MM_f(s) \MM_g(s).
\label{mellinconvolutiontheorem}
\end{gather}

\noindent
In other words, we have 
$$\forall_{s\in\reals}\ \int_0^{\infty}\ x^{s-1} f\star g(x)\ dx = \left(\int_0^{\infty}\ x^{s-1}f(x)\ dx\right)\cdot \left(\int_0^{\infty}\ x^{s-1}g(x)\ dx\right),$$

\noindent
which is the Mellin counterpart of the classical convolution theorem.

\end{document}